\title{Bounded gaps between primes in number fields and function fields}
\author[A. Castillo]{Abel Castillo}
\address{Department of Mathematics \\ University of Illinois at Chicago \\ Chicago, IL 60607}
\email{acasti8@uic.edu}
\author[C. Hall]{Chris Hall}
\address{Department of Mathematics \\ University of Wyoming \\ Laramie, WY 82071}
\email{chall14@uwyo.edu}
\author[R. Lemke Oliver]{Robert J. Lemke Oliver}
\address{Department of Mathematics \\ Stanford University \\ Palo Alto, CA 94305}
\email{rjlo@stanford.edu}
\author[P. Pollack]{Paul Pollack}
\address{Department of Mathematics\\University of Georgia\\Athens, GA 30602}
\email{pollack@uga.edu}
\author[L. Thompson]{Lola Thompson}
\address{Department of Mathematics\\Oberlin College\\Oberlin, OH 44074}
\email{lola.thompson@oberlin.edu}
\thanks{CH was partially supported by a grant from the Simons Foundation (245619)}
\thanks{RJLO was supported by an NSF Mathematical Sciences Postdoctoral Research Fellowship}
\newtheorem{theorem}{Theorem}[section]
\newtheorem{lemma}[theorem]{Lemma}
\newtheorem{corollary}[theorem]{Corollary}
\newtheorem{proposition}[theorem]{Proposition}
\newtheorem*{conjecture}{Conjecture}
\newtheorem*{theorem*}{Theorem}
\theoremstyle{remark}\newtheorem*{remark}{Remark}
\numberwithin{equation}{section}
\renewcommand{\pmod}[1]{\left(\mathrm{mod}\,#1\right)}
\renewcommand{\phi}{\varphi}
\begin{document}

\begin{abstract}
The Hardy--Littlewood prime $k$-tuples conjecture has long been thought to be completely unapproachable with current methods.  While this sadly remains true, startling breakthroughs of Zhang, Maynard, and Tao have nevertheless made significant progress toward this problem.  In this work, we extend the Maynard-Tao method to both number fields and the function field $\mathbb{F}_q(t)$.
\end{abstract}

\maketitle

\section{Introduction and statement of results}
\label{sec:introduction}

The classical twin prime conjecture asserts that there are infinitely many primes $p$ such that $p+2$ is also prime.  While this conjecture remains completely out of reach of current methods, there has nevertheless been remarkable recent progress made towards it, beginning with work of Goldston, Pintz, and Y{\i}ld{\i}r{\i}m \cite{GPY2009}, who showed, if $p_n$ denotes the $n$th prime, that
\[
\liminf_{n\to\infty} \frac{p_{n+1}-p_n}{\log p_n} = 0,
\]
so that gaps between consecutive primes can be arbitrarily small when compared with the average gap.  Expanding upon these techniques, Zhang \cite{Zhang2013} proved the amazing result that
\[
\liminf_{n\to\infty} p_{n+1}-p_n \leq 70 \cdot 10^{6},
\]
i.e., that there are bounded gaps between primes!  The techniques of Zhang and Goldston, Pintz, and Y{\i}ld{\i}r{\i}m have subsequently been significantly expanded upon by Maynard \cite{Maynard2013}, Tao, and the Polymath project \cite{Polymath}, so that the best known bound on gaps between primes, at least at the time of writing, is 252.  Remarkably, the techniques of Maynard and Tao also enable one to achieve bounded gaps between $m$ consecutive primes, i.e., that $\liminf (p_{n+m-1}-p_n)$ is finite.

The main idea in all of these results is to attack approximate versions of the \emph{Hardy--Littlewood prime $k$-tuples conjecture}: Given a $k$-tuple $\mathcal{H}=(h_1,\dots,h_k)$ of distinct integers, we say that $\mathcal{H}$ is \emph{admissible} if the set $\{h_1,\dots,h_k\} \bmod p$ is not all of $\mathbb{Z}/p\mathbb{Z}$ for each prime $p$.  The Hardy--Littlewood prime $k$-tuples conjecture can then be stated as follows.

\begin{conjecture}
Given an admissible $k$-tuple $\mathcal{H}=(h_1,\dots,h_k)$, there are infinitely many integers $n$ such that each of $n+h_1,\dots,n+h_k$ is prime.
\end{conjecture}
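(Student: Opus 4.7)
The plan is to attack the conjecture via the Hardy--Littlewood circle method applied to the shifted correlation
\[
\mathcal{N}(N) := \sum_{n \leq N} \Lambda(n+h_1) \cdots \Lambda(n+h_k),
\]
where $\Lambda$ is the von Mangoldt function; it suffices to show $\mathcal{N}(N) \gg N$, and the heuristic prediction is the asymptotic $\mathcal{N}(N) \sim \mathfrak{S}(\mathcal{H}) \, N$ with singular series
\[
\mathfrak{S}(\mathcal{H}) = \prod_p \left(1 - \frac{|\{h_1,\ldots,h_k\} \bmod p|}{p}\right)\!\left(1 - \frac{1}{p}\right)^{-k},
\]
which is positive precisely because $\mathcal{H}$ is admissible. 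Setting $S(\alpha) = \sum_{n \leq N} \Lambda(n) e(\alpha n)$, Fourier orthogonality expresses $\mathcal{N}(N)$ as an integral of products of $S$ twisted by the phases $e(-h_i \alpha)$, which one dissects into major arcs $\mathfrak{M}$ (neighborhoods of reduced fractions $a/q$ with $q \leq (\log N)^A$) and a minor-arcs complement $\mathfrak{m}$.

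The first step is the major-arcs evaluation: on $\mathfrak{M}$, one approximates $S$ by its local model via Siegel--Walfisz (or Bombieri--Vinogradov for extra uniformity in $q$), and a standard if lengthy computation produces the expected contribution $\mathfrak{S}(\mathcal{H}) N + O(N/(\log N)^B)$ for any fixed $B > 0$, with the singular series emerging as a product of local densities encoding the obstructions at each prime.

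The decisive step is the minor-arcs bound. Even for $k=2$, the integral is $\int_{\mathfrak{m}} |S(\alpha)|^2 e(h\alpha) \, d\alpha$, and one seeks to show this is $o(N)$. The only available pointwise control, Vinogradov's estimate $\sup_{\alpha \in \mathfrak{m}} |S(\alpha)| \ll N (\log N)^{-A}$, combines with Parseval's identity $\int_0^1 |S|^2 \, d\alpha \ll N \log N$ to give a trivial bound of size $N \log N$, which \emph{exceeds} the required main term of size $N$ by a logarithmic factor, and more refined $L^\infty$--$L^1$ splits are worse still. For $k \geq 3$ the situation degrades further, as the $k$ factors of $S$ cannot be genuinely decoupled into a ternary-Goldbach style configuration with independent exponential variables, and the available moment estimates fall short by a power of $\log N$.

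This is the principal obstacle, and it is a manifestation of Selberg's parity problem: the conjecture for $k \geq 2$ is essentially equivalent to a Chowla-type cancellation statement for shifted correlations of $\Lambda$ (equivalently of $\mu$), which lies beyond the reach of all current techniques in analytic number theory. The Maynard--Tao apparatus developed later in this paper sidesteps the obstacle by demanding only that some bounded subset of the $n+h_i$ be prime, sacrificing the full conclusion. I see no route to the conjecture as stated using present methods; an unconditional proof would require a fundamental advance on the parity problem or on multiplicative correlations at multiple shifts, so the proposed proof stalls precisely at the minor-arcs estimate.
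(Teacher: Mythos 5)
This statement is the Hardy--Littlewood prime $k$-tuples conjecture itself; the paper offers no proof of it and explicitly describes it as intractable at present, proving only the weaker Maynard--Tao-type approximations. Your proposal correctly declines to prove it, and your diagnosis is accurate: the major-arc analysis is routine, but the minor-arc integral for $k\geq 2$ cannot be beaten below the main term with any known $L^\infty$ or moment bounds on $S(\alpha)$, and the underlying obstruction is the parity problem. In short, you have reached the same conclusion as the authors --- the conjecture is open --- and your account of where the circle method stalls is the standard and correct one.
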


This conjecture remains intractable at present--- note that the $k=2$ case immediately implies the twin prime conjecture. However, Maynard, Tao, and Zhang have recently succeeded in obtaining partial results that would have seemed incredible just a few years ago.  In particular, we have the following theorem of Maynard \cite{Maynard2013} and Tao.

\begin{theorem*}[Maynard--Tao]
Let $m\geq 2$.  There exists a constant $k_0:=k_0(m)$ such that, for any admissible $k$-tuple $\mathcal{H}=(h_1,\dots,h_k)$ with $k\geq k_0$, there are infinitely many $n$ such that at least $m$ of $n+h_1,\dots,n+h_k$ are prime.
\end{theorem*}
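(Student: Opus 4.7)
\emph{Proof plan.} I would follow the multidimensional sieve approach of Maynard. Fix an admissible $k$-tuple $\mathcal{H} = (h_1, \dots, h_k)$ and a large parameter $N$. The strategy is to construct nonnegative weights $w_n$ supported on $n \in [N, 2N]$ such that
\[
S(N) := \sum_{N \le n < 2N} w_n \left( \sum_{i=1}^k \mathbf{1}_{\text{prime}}(n+h_i) - (m-1) \right) > 0
\]
for all sufficiently large $N$. Positivity forces some $n$ in the range with at least $m$ of the $n+h_i$ prime, and letting $N \to \infty$ yields infinitely many such $n$. A preliminary $W$-trick (setting $W = \prod_{p \le D_0} p$ for $D_0 = D_0(k)$ slowly tending to infinity, and restricting $n$ to a residue class $n \equiv \nu \pmod W$ with $\nu + h_i$ coprime to $W$ for every $i$, which exists by admissibility) removes spurious contributions from small primes and yields a clean singular series.

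For the weights, I would take the Maynard--Tao form
\[
w_n = \Biggl( \sum_{\substack{d_i \mid n + h_i \\ 1 \le i \le k}} \lambda_{d_1,\dots,d_k} \Biggr)^{\!2},
\]
supported on $\prod_i d_i \le R$, squarefree $d_i$ coprime to $W$, with the crucial Maynard--Tao parametrization
\[
\lambda_{d_1,\dots,d_k} = \Biggl( \prod_{i=1}^k \mu(d_i)\, d_i \Biggr) F\!\left( \frac{\log d_1}{\log R}, \dots, \frac{\log d_k}{\log R} \right)
\]
for a fixed smooth $F$ supported on the simplex $\Delta_k = \{\mathbf{t} \in [0,1]^k : t_1 + \dots + t_k \le 1\}$. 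Expanding the square in $w_n$, swapping summation, and applying Möbius inversion (to diagonalize in a Selberg-like manner), together with the Bombieri--Vinogradov theorem (which provides level of distribution $\theta = 1/2 - \epsilon$ unconditionally), lets one, upon setting $R = N^{\theta/2 - \delta}$, establish asymptotics of the shape
\begin{align*}
\sum_n w_n &\sim C_1(W,\mathcal{H}) \cdot N (\log R)^k \cdot I_k(F), \\
\sum_n w_n \cdot \mathbf{1}_{\text{prime}}(n+h_i) &\sim C_2(W,\mathcal{H}) \cdot \frac{N (\log R)^{k+1}}{\log N} \cdot J_k^{(i)}(F),
\end{align*}
where $I_k(F)$ and $J_k^{(i)}(F)$ are explicit quadratic integral functionals of $F$ and its partial derivatives over $\Delta_k$.

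With $\theta = 1/2$ and after the constants are tracked, $S(N) > 0$ reduces to finding $F$ with
\[
\frac{\sum_{i=1}^k J_k^{(i)}(F)}{I_k(F)} > \frac{2(m-1)}{\theta} = 4(m-1).
\]
Defining $M_k$ as the supremum of the left side, the theorem reduces to the variational statement $M_k \to \infty$ as $k \to \infty$. I would prove this by inserting a symmetric product ansatz $F(\mathbf{t}) = \prod_{i=1}^k g(t_i)$ on $\Delta_k$, collapsing the problem to a one-dimensional optimization in $g$; a judicious choice (for instance a variant of $g(t) = (1 + Akt)^{-1}$ with $A$ calibrated in terms of $k$) yields $M_k \gg \log k$, which exceeds $4(m-1)$ once $k \ge k_0(m)$.

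The main obstacle is the asymptotic evaluation of the prime-twisted sum: the contribution from each fixed $i$ ultimately reduces to counting $n + h_i$ prime in arithmetic progressions of moduli up to $R^2 = N^{\theta - 2\delta}$, and it is precisely this range that forces the use of Bombieri--Vinogradov. The untwisted sum $\sum_n w_n$ is easier but still requires careful handling of the multi-index Möbius structure to isolate the diagonal main term. The variational step, by contrast, is relatively routine once the product ansatz is adopted; pushing through the $\log k$ lower bound with workable constants is the combinatorial payoff that distinguishes the multidimensional sieve from the one-dimensional GPY weight and makes the whole argument go through.
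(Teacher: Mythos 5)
Your proposal is correct and follows essentially the same route that the paper (following Maynard) takes: the $W$-trick, the multidimensional Selberg-type weights encoded by a smooth $F$ on the simplex, Bombieri--Vinogradov giving level of distribution $\theta<1/2$ with $R=N^{\theta/2-\delta}$, asymptotics for the sieve sums in terms of $I_k(F)$ and $J_k^{(m)}(F)$, reduction to the variational quantity $M_k$, and the product ansatz $g(t)=(1+Akt)^{-1}$ yielding $M_k\gg\log k$. The one inaccuracy is the claimed closed form for $\lambda_{d_1,\dots,d_k}$: the actual Maynard parametrization inserts an additional sum over multiples $r_i$ of $d_i$ weighted by $\mu(r_1\cdots r_k)^2/\prod\varphi(r_i)$ with $F$ evaluated at $\log r_i/\log R$ (equivalently, one sets the diagonalized variables $y_{r_1,\dots,r_k}=F(\cdots)$ and inverts), rather than evaluating $F$ directly at $\log d_i/\log R$; the direct form you wrote does not diagonalize the quadratic form cleanly, but the rest of your plan is unaffected by this since the asymptotics and variational problem come out identically.
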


A result on bounded gaps comes from taking $m=2$ and providing an explicit admissible $k_0(2)$-tuple of small diameter.  Indeed, Zhang's \cite{Zhang2013} main theorem is the $m=2$ case of the above, and he obtained $k_0(2)=3.6\cdot 10^{6}$.  Maynard \cite{Maynard2013} was able to take $k_0(2)=105$, and the Polymath project \cite{Polymath} has reduced the permissible value to $k_0(2)=51$.  In our work at hand, we prove an analogue of the Maynard--Tao theorem for number fields and the function field $\mathbb{F}_q(t)$, and we derive corollaries which we believe to be of additional arithmetic interest.

We begin by extending the Maynard--Tao theorem to number fields, for which we must first fix some notation.  Given a number field $K$ with ring of integers $\mathcal{O}_K$, we say that $\alpha\in\mathcal{O}_K$ is \emph{prime} if it generates a principal prime ideal, and we say that a $k$-tuple $(h_1,\dots,h_k)$ of distinct elements of $\mathcal{O}_K$ is \emph{admissible} if the set $\{h_1,\dots,h_k\}$ mod $\mathfrak{p}$ is not all of  $\mathcal{O}_K/\mathfrak{p}$ for each prime ideal $\mathfrak{p}$.  Our first theorem is a direct translation of the Maynard--Tao theorem.

\begin{theorem}
\label{thm:number_field}
Let $m\geq 2$.  There is an integer $k_0:=k_0(m,K)$ such that for any admissible $k$-tuple $(h_1,\dots,h_k)$ in $\mathcal{O}_K$ with $k\geq k_0$, there are infinitely many $\alpha\in\mathcal{O}_K$ such that at least $m$ of $\alpha+h_1,\dots,\alpha+h_k$ are prime.
\end{theorem}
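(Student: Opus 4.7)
The plan is to replay the Maynard--Tao argument with $\mathbb{Z}$ replaced by $\mathcal{O}_K$, indexing all sieve weights by integral ideals of $\mathcal{O}_K$ rather than by positive integers. First I would fix a fundamental domain $\mathcal{F} \subset K\otimes_{\mathbb{Q}}\mathbb{R} \cong \mathbb{R}^{r_1}\times\mathbb{C}^{r_2}$ for the multiplicative action of the torsion-free part of $\mathcal{O}_K^{\times}$, and measure the ``size'' of $\alpha\in\mathcal{O}_K$ by requiring $\alpha\in X\mathcal{F}$ as $X\to\infty$. Because the theorem counts genuine generators of principal prime ideals, only the principal ideal class will contribute; this introduces an overall factor of $1/h_K$ in the natural density but does not affect the qualitative statement.

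With $R = X^{\theta/2-\epsilon}$, where $\theta$ is a level of distribution to be specified, I would introduce sieve weights $\lambda_{\mathfrak{d}_1,\dots,\mathfrak{d}_k}$ of Maynard--Tao type, supported on $k$-tuples of squarefree pairwise coprime integral ideals with $\mathrm{N}(\mathfrak{d}_1\cdots\mathfrak{d}_k)\leq R$ and built from a smooth function $F$ on the standard simplex $\{x\in\mathbb{R}_{\geq 0}^k : \sum x_i\leq 1\}$. Setting $w(\alpha)=\sum_{\mathfrak{d}_i\mid(\alpha+h_i)}\lambda_{\mathfrak{d}_1,\dots,\mathfrak{d}_k}$, I would then analyze
\begin{align*}
S_1 &= \sum_{\alpha\in\mathcal{O}_K\cap X\mathcal{F}} w(\alpha)^{2}, \\
S_2 &= \sum_{\alpha\in\mathcal{O}_K\cap X\mathcal{F}}\,\sum_{i=1}^{k} \mathbf{1}_{\alpha+h_i\text{ prime}}\, w(\alpha)^{2},
\end{align*}
and show that for $k$ large enough the normalized ratio $S_2/S_1$ exceeds $m-1$; since the indicator takes values in $\{0,1\}$, this forces some $\alpha$ to produce at least $m$ primes among $\alpha+h_1,\dots,\alpha+h_k$.

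The evaluation of $S_1$ reduces to lattice-point counting in $X\mathcal{F}$ in prescribed ideal residue classes, which by a Landau-type estimate contributes a main term $\mathrm{vol}(X\mathcal{F})/\mathrm{N}(\mathfrak{d}_1\cdots\mathfrak{d}_k)$ with acceptable error; Selberg diagonalization then produces the same quadratic form in $F$ as in Maynard's paper. The evaluation of $S_2$ requires a Bombieri--Vinogradov theorem for principal prime elements in $\mathcal{O}_K$: on average over integral ideals $\mathfrak{q}$ with $\mathrm{N}(\mathfrak{q})\leq X^{\theta-\epsilon}$, the count of prime $\alpha+h_i\in X\mathcal{F}$ lying in a given invertible residue class modulo $\mathfrak{q}$ must match the expected main term up to a power-saving error in $X$.

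The main obstacle will be producing this Bombieri--Vinogradov input with the required uniformity in the ideal modulus and with the restriction to generators of principal prime ideals rather than to arbitrary prime ideals. Classical work of Hinz and Huxley supplies the analogous statement for prime ideals, and the passage to prime elements can be handled by decomposing along ideal classes and using Hecke characters (equivalently, a direct fundamental-domain argument) to isolate generators lying in $X\mathcal{F}$. Once the level-of-distribution input is in place, the remaining combinatorial optimization over admissible $F$ is identical to Maynard's, so one can take $k_0(m,K)=k_0(m)$ with the same explicit numerical constants as in the classical case.
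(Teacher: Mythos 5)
Your proposal follows essentially the same route as the paper: Maynard--Tao weights indexed by squarefree integral ideals, lattice-point counting in an archimedean region to evaluate $S_1$, and a Bombieri--Vinogradov input for prime elements to evaluate $S_2$. The paper uses a Minkowski-space box $A(N)$ rather than a fundamental domain for the unit action, but that is a technical choice; either region supports the required Landau-type counting with power-saving boundary error, and the paper then invokes Mitsui's prime number theorem and Hinz's generalized Bombieri--Vinogradov theorem to supply the analytic inputs.

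There is, however, one concrete error in your conclusion, and it sits exactly at the step you correctly identify as ``the main obstacle.'' The passage from prime ideals (where Huxley-type results give level of distribution $\tfrac12$) to prime \emph{elements} lying in a prescribed archimedean region is not free: one must control the Hecke Gr\"ossencharacters that cut out the region, and in Hinz's theorem this costs level of distribution. The result actually available is $\theta < \tfrac{1}{r_2+5/2}$ when $K$ has $r_2>0$ pairs of complex embeddings, with $\theta<\tfrac12$ only in the totally real case. Since the admissible $k_0$ is determined by $\lceil \theta M_k/2\rceil$, your final claim that one can take $k_0(m,K)=k_0(m)$ ``with the same explicit numerical constants as in the classical case'' is unjustified and, on current knowledge, false for fields with complex places; $k_0$ genuinely depends on $r_2$. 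This does not invalidate the proof of the theorem as stated (any positive $\theta$ suffices for the existence of some $k_0(m,K)$), but you should either restrict the constant-matching claim to totally real $K$ or supply a Bombieri--Vinogradov theorem for prime elements in archimedean boxes with level $\tfrac12$, which is not in the literature you cite.
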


{\noindent \emph{Two remarks:}} \emph{1.} As the proof of Theorem \ref{thm:number_field} will show, the numerology which produces $k_0$ from $m$ is similar to that in Maynard's paper, and is exactly the same if $K$ is totally real.  In general, $k_0$ will depend only upon $m$ and the number of complex embeddings of $K$.

\emph{2.} Another way of extending the Maynard--Tao theorem to number fields was considered by Thorner \cite{Thorner2014}, who proved the analogous result for rational primes satisfying Chebotarev-type conditions (i.e., primes $p$ such that $\mathrm{Frob}_p$ lies in a specified conjugacy-invariant subset of $\mathrm{Gal}(K/\mathbb{Q})$ for some $K/\mathbb{Q}$).
\\

As an immediate corollary to Theorem \ref{thm:number_field}, we can deduce bounded gaps between prime elements of $\mathcal{O}_K$, where the bound depends only on the number of complex embeddings of $K$.  As an example, we have the following corollary for totally real fields. 

\begin{corollary}
\label{cor:bounded_real}
If $K/\mathbb{Q}$ is totally real, then there are infinitely many primes $\alpha_1,\alpha_2\in\mathcal{O}_K$ such that $|\sigma(\alpha_1-\alpha_2)|\leq 600$ for every embedding $\sigma$ of $K$.
\end{corollary}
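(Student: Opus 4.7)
The plan is to reduce the corollary to the $m=2$ case of Theorem \ref{thm:number_field}. Since $K$ is totally real, the first remark following that theorem lets us take $k_0(2,K)$ equal to Maynard's constant $k_0(2) = 105$. It therefore suffices to exhibit an admissible $105$-tuple $(h_1,\ldots,h_{105})$ in $\mathcal{O}_K$ for which every pairwise difference $h_i - h_j$ satisfies $|\sigma(h_i - h_j)| \leq 600$ under every embedding $\sigma$. The natural candidate is the explicit admissible $105$-tuple of rational integers of diameter $600$ constructed in Maynard's paper: its entries lie in $\mathbb{Z} \subset \mathcal{O}_K$, and since each embedding fixes $\mathbb{Z}$, every pairwise difference maps to itself and the diameter bound is automatic.

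The one point that genuinely requires checking is that admissibility of the tuple in $\mathbb{Z}$ upgrades to admissibility in $\mathcal{O}_K$. Let $\mathfrak{p}$ be a prime of $\mathcal{O}_K$ lying above the rational prime $p$, with residue degree $f$. The reductions of $h_1,\ldots,h_{105}$ modulo $\mathfrak{p}$ factor through the composite $\mathbb{Z} \to \mathcal{O}_K \to \mathcal{O}_K/\mathfrak{p}$, so they land in the prime subfield $\mathbb{F}_p \subset \mathcal{O}_K/\mathfrak{p} \cong \mathbb{F}_{p^f}$. When $f \geq 2$ this subfield is proper, so the image of the tuple cannot possibly be all of $\mathcal{O}_K/\mathfrak{p}$; when $f = 1$, the condition at $\mathfrak{p}$ coincides with the admissibility condition at $p$ in $\mathbb{Z}$, which holds by construction.

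Once admissibility is verified, Theorem \ref{thm:number_field} delivers infinitely many $\alpha \in \mathcal{O}_K$ for which at least two of $\alpha+h_1,\ldots,\alpha+h_{105}$ are prime. Choosing two such primes $\alpha_1 = \alpha + h_i$ and $\alpha_2 = \alpha + h_j$ gives $\alpha_1 - \alpha_2 = h_i - h_j \in \mathbb{Z}$, and the diameter bound from the first paragraph finishes the proof. The main obstacle — nearly a triviality once the setup is right — is precisely this reduction of admissibility over $\mathcal{O}_K$ to admissibility over $\mathbb{Z}$; everything quantitative, including the constants $105$ and $600$, is inherited directly from Maynard's construction, and no arithmetic of $K$ beyond the trivial observation about residue degrees enters.
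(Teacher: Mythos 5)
Your argument is correct and is essentially the paper's own proof: you reduce to Theorem \ref{thm:number_field} with $m=2$ and $k_0(2,K)=105$, use a narrow admissible $105$-tuple of rational integers of diameter $600$, and verify that admissibility over $\mathbb{Z}$ upgrades to admissibility over $\mathcal{O}_K$ via exactly the residue-degree observation that the paper isolates as Lemma \ref{lem:admissibleinZ}. The only nitpick is attribution: the admissible $105$-tuple of diameter $600$ is due to Engelsma (as the paper notes), though Maynard's paper is indeed where it is quoted.
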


We now turn our attention to the function field $\mathbb{F}_q(t)$.  Here, the role of primes is played by monic irreducible polynomials in $\mathbb{F}_q[t]$. We define a $k$-tuple $(h_1,\dots,h_k)$ of polynomials in $\mathbb{F}_q[t]$ to be \emph{admissible} if, for each irreducible $P$, the set $\{h_1,\dots,h_k\}$ does not cover all residue classes of $\mathbb{F}_q[t]/P$.

\begin{theorem}
\label{thm:function_field}
Let $m\geq 2$.  There is an integer $k_0:=k_0(m)$, independent of $q$, such that for any admissible $k$-tuple $(h_1,\dots,h_k)$ of polynomials in $\mathbb{F}_q[t]$ with $k\geq k_0$, there are infinitely many $f \in\mathbb{F}_q[t]$ such that at least $m$ of $f+h_1,\dots,f+h_k$ are irreducible.
\end{theorem}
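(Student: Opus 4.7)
The plan is to transcribe the Maynard--Tao sieve argument to $\mathbb{F}_q[t]$, with degree playing the role of logarithm and the function-field Euler totient $\Phi(Q) = |(\mathbb{F}_q[t]/Q)^\times|$ playing the role of $\phi$. Fix a large integer $N$ and consider monic $f \in \mathbb{F}_q[t]$ of degree exactly $N$. First, perform a $W$-trick: set $W = \prod_P P$ over monic irreducibles of degree at most some $D_0 = D_0(k)$; admissibility of $(h_1,\dots,h_k)$ permits a residue $\nu_0 \pmod W$ with $\gcd(\nu_0+h_i,W)=1$ for every $i$, and we henceforth restrict to $f \equiv \nu_0 \pmod W$. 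Then introduce Maynard--Tao weights $\lambda_{\mathbf d}$ built from a smooth $F$ supported on the simplex $\{\mathbf x : x_i\ge 0,\ \sum x_i \le 1\}$ and truncated by $\sum \deg r_i \leq R = \theta N$, and form $a_f = \bigl(\sum_{d_i \mid f+h_i} \lambda_{\mathbf d}\bigr)^2$.

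The core of the argument compares
\[
S_1 = \sum_{\substack{\deg f = N \\ f \equiv \nu_0\, (W)}} a_f \qquad\text{and}\qquad S_2 = \sum_{\substack{\deg f = N \\ f \equiv \nu_0\, (W)}} \Bigl(\sum_{i=1}^k \chi_{\mathrm{irr}}(f+h_i)\Bigr) a_f.
\]
A Selberg-style manipulation, exactly parallel to Maynard's and using the function-field Mertens estimate $\prod_{\deg P \leq D_0}(1-1/|P|) \asymp 1/D_0$ in place of its classical counterpart, produces asymptotics of the shape $S_1 \sim B(\mathbf h) R^k I_k(F)$ and $S_2 \sim B(\mathbf h) R^{k+1} \sum_i J_k^{(i)}(F)/N$. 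Here $I_k(F)=\int F^2$ and $J_k^{(i)}(F) = \int(\int F\,dx_i)^2$ are the standard Maynard integrals on the simplex, and the shared factor $B(\mathbf h)$ absorbs the dependence on $q^N$ and on the singular series at $W$, so that any $q$-dependence cancels in the ratio $S_2/S_1 \sim \theta \sum_i J_k^{(i)}(F)/I_k(F)$.

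For the evaluation of $S_2$ one needs a level-of-distribution statement for irreducibles in arithmetic progressions. Here the function-field setting is a windfall: Weil's Riemann hypothesis for curves, applied to the $L$-functions of the Dirichlet characters modulo $Q$, gives
\[
\pi_q(N; Q, a) = \frac{\pi_q(N)}{\Phi(Q)} + O\Bigl(\tfrac{q^{N/2}\deg Q}{N}\Bigr)
\]
uniformly in $(a,Q)=1$. Summing trivially over all $Q$ of degree at most $\theta N$ with any fixed $\theta < 1/2$ defeats the main term, giving a Bombieri--Vinogradov-type bound with level $\theta = 1/2-\epsilon$ and no dispersion argument.

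The proof then reduces to Maynard's variational problem: if $M_k := \sup_F \sum_i J_k^{(i)}(F)/I_k(F)$ exceeds $2m/\theta$, some $F$ makes $S_2 > m S_1$, and then a pigeonhole argument on the box produces the required $f$; running this along $N \to \infty$ yields infinitely many. Since Maynard establishes $M_k \gg \log k$, taking $k \geq \exp(Cm)$ suffices, and every constant entering (in the Mertens estimate, the singular series, and the error analysis) is absolute, so $k_0 = k_0(m)$ is independent of $q$. The main obstacle is not conceptual but a matter of careful bookkeeping: one must verify at each step that no hidden $q$-dependence creeps into the constants, which requires particular care in the $W$-trick and the singular-series computation when $q$ is small and the first few degree-classes of irreducibles are sparse.
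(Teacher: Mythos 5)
Your proposal is correct and follows essentially the same route as the paper: the Maynard--Tao sieve transcribed to $\mathbb{F}_q[t]$ with a $W$-trick, the level of distribution $\theta<1/2$ supplied by Weil's Riemann hypothesis for curves (Hayes), and the $q$-independence of $k_0(m)$ coming from the cancellation of $\log q$ between the residue of $\zeta_{\mathbb{F}_q[t]}$ at $s=1$ and the prime polynomial theorem. The only slip is the truncation $R=\theta N$ in degree (the moduli $[\mathfrak{d}_i,\mathfrak{e}_i]$ then reach degree $2R$, exceeding the level); it should be $(\theta/2-\delta)N$, and your final criterion $M_k>2m/\theta$ already reflects this correct normalization.
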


\begin{remark}
Strikingly, the independence of $k_0$ from $q$ passes even so far that Maynard's values of $k_0(m)$ are permissible in this setting as well.  In particular, we may take $k_0(2)=105$.
\end{remark}

As a corollary, we can deduce bounded degree gaps between irreducible polynomials.  In fact, one could already prove something stronger: if $q \geq 3$, then any $a \in \mathbb{F}_q^\times$ occurs infinitely often as a gap (see \cite{Hall2006} for $q > 3$ and \cite{Pollack2008} for $q=3$). These proofs are constructive, but the degrees of the irreducible polynomials produced lie in very sparse sets.  Our next result shows that any $a\in\mathbb{F}_q$, and, indeed, any monomial, in fact occurs in many degrees. Moreover, given any large degree, a positive proportion of elements of $\mathbb{F}_q[t]$ of bounded degree occur as a gap.

\begin{theorem}
\label{thm:functionfield_proportion}
Let $k_0:=k_0(2)$ from Theorem \ref{thm:function_field} and let $q \geq k_0+1$.
\begin{itemize}

\item[(i)] For $d\geq 0$, if $n$ is sufficiently large and satisfies $(n-d,q-1)=1$, each monomial in $\mathbb{F}_q[t]$ of degree $d$ occurs as a gap between monic irreducibles of degree $n$.

\item[(ii)] For $d\geq 0$ and $n$ sufficiently large, the proportion of elements of degree $d$ that appear as gaps between irreducibles in degree $n$ is at least
\[
\frac{1}{k_0-1}-\frac{1}{q-1}.
\]
The same conclusion holds if we restrict to monomials of degree $d$.
\end{itemize}
\end{theorem}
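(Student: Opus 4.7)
My plan for (i) combines Theorem~\ref{thm:function_field} with a scaling symmetry. For any $\gamma\in\mathbb{F}_q^\times$, the substitution $f(t)\mapsto g(t):=\gamma^{-n}f(\gamma t)$ preserves the class of monic irreducibles of degree $n$ and transforms a pair $(f,f+at^d)$ into $(g,g+a\gamma^{d-n}t^d)$. Hence the set $A\subseteq\mathbb{F}_q^\times$ of those $a$ for which $at^d$ is a gap between monic irreducibles of degree $n$ is closed under multiplication by the subgroup $\{\gamma^{d-n}:\gamma\in\mathbb{F}_q^\times\}\subseteq\mathbb{F}_q^\times$. Under the hypothesis $(n-d,q-1)=1$ this subgroup is all of $\mathbb{F}_q^\times$, so it suffices to show $A\neq\emptyset$. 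Taking $H=\{s_1t^d,\ldots,s_{k_0}t^d\}$ for any $k_0$ distinct $s_i\in\mathbb{F}_q$ (possible since $q\ge k_0+1$) produces an admissible tuple, since for every prime $P\in\mathbb{F}_q[t]$ the reduction $H\bmod P$ has at most $k_0<q\le q^{\deg P}$ elements. Theorem~\ref{thm:function_field} then supplies, for each sufficiently large $n$, a monic $f$ of degree $n$ such that two of the $f+s_it^d$ are monic irreducibles of degree $n$, and their difference is a nonzero element of $A\cdot t^d$.

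For (ii) I apply Tur\'an's theorem to a Cayley-type graph. In the monomial case, let $A$ be as above; note $A=-A$, so the Cayley graph $\Gamma$ on $\mathbb{F}_q$ with connection set $A^c:=\mathbb{F}_q^\times\setminus A$ is well-defined and $|A^c|$-regular. Applying the argument of the previous paragraph to an arbitrary $k_0$-subset $S\subset\mathbb{F}_q$ shows that some nonzero $s_i-s_j\in A$, so $\Gamma$ has no $K_{k_0}$. Tur\'an's theorem then yields
\[
\tfrac{q\,|A^c|}{2}=|E(\Gamma)|\le\Bigl(1-\tfrac{1}{k_0-1}\Bigr)\tfrac{q^2}{2},
\]
whence $|A|\ge q-1-\tfrac{k_0-2}{k_0-1}q=\tfrac{q-k_0+1}{k_0-1}>(q-1)\bigl(\tfrac{1}{k_0-1}-\tfrac{1}{q-1}\bigr)$. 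The general case is analogous with $\mathbb{F}_q$ replaced by the additive group $V$ of polynomials of degree $\le d$ (so $|V|=q^{d+1}$), $A\subset\mathcal{P}_d$ now denoting the set of degree-$d$ gaps, and connection set $\mathcal{P}_d\setminus A$: a putative $k_0$-clique $S\subset V$ would have all pairwise differences of degree exactly $d$; $S$ is admissible by the same mod-$P$ count; and Theorem~\ref{thm:function_field} forces one such difference to lie in $A$, a contradiction. Tur\'an on $V$ gives $|A|\ge q^d(q-k_0+1)/(k_0-1)$, yielding the same proportion $\frac{1}{k_0-1}-\frac{1}{q-1}$.

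The main obstacle is not conceptual but quantitative: Theorem~\ref{thm:function_field} as stated only delivers infinitely many $f$ of unspecified degree, whereas I require, for each sufficiently large $n$, a monic $f$ of exact degree $n$. I will extract this from the quantitative form of its proof, which should produce a positive proportion of valid $f$ among those of each fixed large degree, letting me specialize to any given $n$.
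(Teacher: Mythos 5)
Your part (i) is essentially the paper's own argument: both rescale a known gap by $\omega$ with $\omega^{n-d}=c/a$ to hit any prescribed leading coefficient, after first producing one monomial gap by applying Theorem~\ref{thm:function_field} to a tuple of distinct monomials of degree $d$ (you take $k_0$ of them, the paper takes all $q-1$; the small difference is immaterial). Part (ii), however, is a genuinely different route. The paper introduces a predicate $Z(k,d,n)$ and argues by downward induction on $k\le k_0$: either $Z(k-1,d,n)$ holds and one wins by the inductive hypothesis, or there is a ``bad'' $(k-1)$-tuple, and then each admissible one-element extension $h$ of that tuple forces a new gap $h-h_i$, each realized with multiplicity at most $k-1$, which upon counting gives the stated proportion. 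Your Cayley-graph/Tur\'an argument packages the same combinatorial content in one stroke: the fact that any $k_0$ distinct shifts yield at least one gap difference is precisely $K_{k_0}$-freeness of the graph whose connection set is the complement of the gap set, and Tur\'an's theorem bounds the number of non-gap differences directly, with no induction. Your bound $|A|\ge q^d(q-k_0+1)/(k_0-1)$ is in fact marginally stronger than the paper's $q^d(q-k_0)/(k_0-1)$, and both exceed the claimed $(q-1)\bigl(\frac{1}{k_0-1}-\frac{1}{q-1}\bigr)\cdot q^d$. One detail worth stating explicitly is that your Cayley graph on $\mathbb{F}_q$ (resp.\ on $V$) includes the vertex $0$; this is harmless since admissibility of $\{s_1t^d,\dots,s_{k_0}t^d\}$ and the conclusion that some $(s_i-s_j)t^d$ is a gap survive a zero shift. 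Your closing caveat about upgrading Theorem~\ref{thm:function_field} to a per-degree statement is correct but not a defect of your approach: the paper silently performs the same upgrade (its assertion that ``$Z(k_0,d,n)$ holds for $n$ sufficiently large'' relies on the sieve being run in each box $A(q^n)$), and the upgrade is indeed available from the proof of Corollary~\ref{cor:maincor}.
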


\begin{remark}
The observation that our methods permit us to deduce the first part of Theorem \ref{thm:functionfield_proportion} is due to Alexei Entin.
\end{remark}

This paper is organized as follows.  In Section \ref{sec:method}, we describe the Maynard--Tao method in a general context, and we prove Theorems \ref{thm:number_field} and \ref{thm:function_field} simultaneously.  In Section \ref{sec:applications}, we consider the application of these theorems, and we prove Corollary \ref{cor:bounded_real} and Theorem \ref{thm:functionfield_proportion}.


\section{The general Maynard--Tao method}
\label{sec:method}
 
The Maynard--Tao method for producing primes in tuples is very general, and relies upon a multidimensional variant of the Selberg sieve; indeed, the multidimensional nature of the sieve is the key improvement over the work of Goldston, Pintz, and Y\i ld\i r\i m, and that of Zhang.  Many of the steps in the method are essentially combinatorial, relying principally upon multiplicative functionology and elementary statements, rather than hard information about the structure of the integers or the primes.  It is only in a few key places that deep information is used, and, indeed, these results can be assumed to be ``black boxes''.  As such, when proving our theorems, we proceed in a very general fashion.

We first define general notation and establish a dictionary which permits us to talk simultaneously about the integers (the Maynard--Tao theorem), number fields (Theorem \ref{thm:number_field}), and the function field $\mathbb{F}_q(t)$ (Theorem \ref{thm:function_field}).  This of course introduces some notational obfuscation, but we nevertheless consider this approach useful: first, it enables us to prove each theorem simultaneously, and, second, it elucidates what is needed to prove a Maynard--Tao type result in a general setting.  In Section \ref{subsec:sieve}, we use this dictionary, together with the combinatorial arguments of Maynard, to lay down the proof of the Maynard--Tao theorem, assuming the existence of the relevant black boxes.  It is only in Section \ref{subsec:winning} that we remove ourselves from the general setting and specialize to the number field and function field settings where we have the necessary arithmetic information.  Accordingly, it is here that precise versions of Theorems \ref{thm:number_field} and \ref{thm:function_field} are proved.

\subsection{The dictionary}
\label{subsec:dictionary}

We begin by letting $A$ denote the set of ``integers'' that we are considering.  Thus, in the case of the Maynard--Tao theorem, we will take $A=\mathbb{Z}$. In the number field setting, we will take $A$ to be the ring of integers $\mathcal{O}_K$ of some number field $K/\mathbb{Q}$, and in the function field setting, we will take $A$ to be the polynomial ring $\mathbb{F}_q[t]$.

For any positive integer $N$, we let $A(N)$ denote the ``box of size $N$'' inside $A$.  Over the integers, this is the interval $(N,2N]$. In the polynomial setting, we let $A(N)$ be the collection of monic elements of norm $N$; that is, if $N=q^n$, then $A(N)$ is the set of monic, degree $n$ elements of $\mathbb{F}_q[t]$. The definition of $A(N)$ is slightly more complicated in the number field situation. We first define $A_0(N)$ as the set of $\alpha \in \mathcal{O}_K$ which satisfy $0 < \sigma(\alpha) \leq N$ for all real embeddings $\sigma\colon K\hookrightarrow \mathbb{C}$ and satisfy $|\sigma(\alpha)| \leq N$ for all complex embeddings. We then take $A(N) := A_0(2N) \setminus A_0(N)$.

Given a nonzero ideal $\mathfrak{q}\subseteq A$, we define analogues of three classical multiplicative functions, namely the norm $|\mathfrak{q}|:=|A/\mathfrak{q}|$, the ``phi-function'' $\varphi(\mathfrak{q}) :=|(A/\mathfrak{q})^\times|$, and the M\"obius function $\mu(\mathfrak{q}):=(-1)^r$ if $\mathfrak{q}=\mathfrak{p}_1\dots\mathfrak{p}_r$ for distinct prime ideals $\mathfrak{p}_1,\dots,\mathfrak{p}_r$ and $\mu(\mathfrak{q})=0$ otherwise. We define the zeta function of $A$ by
\[
\zeta_A(s) := \sum_{\mathfrak{q}\subseteq A} |\mathfrak{q}|^{-s}.
\]
When $A = \mathcal{O}_K$, the function $\zeta_A(s)$ is the usual Dedekind zeta function of $K$. When $A= \mathbb{F}_q[t]$, one has the closed form expression $\zeta_A(s) = \frac{1}{1-q^{1-s}}$. This differs from the usual zeta function of $\mathbb{F}_q(t)$ in that the Euler factor corresponding to the prime over $1/t$ has been removed.

We record here that the number of elements $\alpha\in A(N)$ satisfying a congruence condition $\alpha \equiv \alpha_0 \pmod{\mathfrak{q}}$ is given by
\[
\frac{|A(N)|}{|\mathfrak{q}|} + O(|\partial A(N,\mathfrak{q})|),
\]
where
\[
|\partial A(N,\mathfrak{q})| \ll
\begin{cases}
1 &\text{if $A=\mathbb{Z}$ or $A=\mathbb{F}_q[t]$}, \\
1+(\frac{|A(N)|}{|\mathfrak{q}|})^{1-\frac{1}{d}} &\text{if $A=\mathcal{O}_K$ and $[K:\mathbb{Q}]=d$}.
\end{cases}
\]
In fact, if $A=\mathbb{F}_q[t]$ and $|A(N)| \geq |\mathfrak{q}|$, then we can take $|\partial A(N,\mathfrak{q})|=0$. When $A=\mathbb{Z}$ or $A=\mathbb{F}_q[t]$, these estimates for $|\partial A(N,\mathfrak{q})|$ are trivial. When $A=\mathcal{O}_K$, matters are more complicated but still relatively familiar. One starts by embedding $K$ into Minkowski space $\mathbb{R}^{r_1} \times \mathbb{C}^{r_2}$. Under this embedding, $\mathfrak{q}$ goes to a lattice, while the constraints on $A(N)$ correspond to a certain region of $\mathbb{R}^{r_1} \times \mathbb{C}^{r_2}$. The estimate for $|\partial A(N,\mathfrak{q})|$ comes from estimating the number of translates of the fundamental parallelogram that intersect the boundary of that region. (Compare with the proof of \cite[Lemma 1]{Kaptan2013}.) For our purposes, what is important to take away is that we always have a power savings in the error term: $|\partial A(N,\mathfrak{q})| \ll (|A(N)|/|\mathfrak{q}|)^{1-\nu}$ for some positive $\nu = \nu(A)$, as long as $|A(N)| \geq |\mathfrak{q}|$.



Let $P$ denote the ``prime'' elements of $A$ and take $P(N) = P \cap A(N)$.  If $A=\mathbb{Z}$, $P$ is simply the set of primes, and, if $A=\mathcal{O}_K$, $P$ is the set of generators of principal prime ideals.  If $A=\mathbb{F}_q[t]$, $P$ is the set of monic irreducible polynomials.  In all of these cases, we have a prime number theorem of the form
\[
|P(N)| \sim c \cdot \frac{|A(N)|}{\log N}
\]
for some constant $c$, and we moreover have a prime number theorem for the set $P(N;\mathfrak{q},\alpha_0)$ of primes in the coprime residue class $\alpha_0\pmod{\mathfrak{q}}$ of the form
\[
|P(N;\mathfrak{q},\alpha_0)| = \frac{1}{\varphi(\mathfrak{q})} |P(N)| + \mathcal{E}(N;\mathfrak{q},\alpha_0).
\]
For any individual $\mathfrak{q}$, we have the upper bound $\mathcal{E}(N;\mathfrak{q},\alpha_0) = o_\mathfrak{q}(P(N))$, and we say that $P$ has \emph{level of distribution} $\theta>0$ if, for any $B>0$, the bound
\[
\sum_{|\mathfrak{q}|\leq Q} \max_{\substack{\alpha_0\pmod{\mathfrak{q}} \\ (\alpha_0,\mathfrak{q})=1}} |\mathcal{E}(N;\mathfrak{q},\alpha_0)| \ll_B \frac{|A(N)|}{\log^B N}
\]
holds for all $Q\leq |A(N)|^\theta$ and all sufficiently large $N$.  If $A = \mathbb{Z}$, the Bombieri-Vinogradov theorem asserts that the primes have level of distribution $\theta$ for any $\theta < 1/2$ and the Elliott-Halberstam conjecture is that any $\theta<1$ is permissible (see \cite{FI2010} for more information).   A generalized form of the Bombieri-Vinogradov theorem due to Hinz \cite{Hinz1988} shows that the primes in $\mathcal{O}_K$ have some level of distribution $\theta$, the specific value depending only on the number of complex conjugate embeddings of $K$; in particular, any totally real field has level of distribution $\theta$ for any $\theta<1/2$.  Finally, in the function field setting, Weil's proof of the Riemann hypothesis for curves implies that we may take any $\theta<1/2$.

\subsection{Sieve manipulations: Multiplicative functionology}
\label{subsec:sieve}

We are now ready to describe the Maynard--Tao method in general terms; our exposition follows that of Maynard \cite{Maynard2013}, to which we make frequent reference.  We say that a tuple $h_1,\dots,h_k\in A$ is admissible if it does not cover all residue classes modulo $\mathfrak{p}$ for any prime ideal $\mathfrak{p}$ of $A$.  The main objects of consideration are the sums
\[
S_1 := \sum_{\begin{subarray}{c} \alpha \in A(N) \\ \alpha \equiv v_0\pmod{\mathfrak{w}} \end{subarray}} \left(\sum_{\begin{subarray}{c} \mathfrak{d}_1,\dots,\mathfrak{d}_k: \\ \mathfrak{d_i} \mid (\alpha+h_i)\,\forall i \end{subarray}} \lambda_{\mathfrak{d}_1,\dots,\mathfrak{d}_k} \right)^2
\]
and
\[
S_2:= \sum_{\begin{subarray}{c} \alpha \in A(N) \\ \alpha \equiv v_0\pmod{\mathfrak{w}} \end{subarray}} \left( \sum_{i=1}^k \chi_P(\alpha+h_i) \right) \left(\sum_{\begin{subarray}{c} \mathfrak{d}_1,\dots,\mathfrak{d}_k: \\ \mathfrak{d_i} \mid (\alpha+h_i) \,\forall i \end{subarray}} \lambda_{\mathfrak{d}_1,\dots,\mathfrak{d}_k} \right)^2,
\]
where $\chi_P(\cdot)$ denotes the characteristic function of $P$, $\lambda_{\mathfrak{d}_1,\dots,\mathfrak{d}_k}$ are suitably chosen weights, $\mathfrak{w}:=\prod_{|\mathfrak{p}|<D_0}\mathfrak{p}$ for some $D_0$ tending slowly to infinity with $N$, say $D_0=\log\log\log N$, and $v_0$ is a residue class modulo $\mathfrak{w}$ chosen so that each $\alpha+h_i$ lies in $A/\mathfrak{w}^\times$.

Because each summand is non-negative, if we can show that $S_2 > \rho S_1$ for some positive $\rho$, then there must be at least one $\alpha\in A(N)$ for which more than $\rho$ of the values $\alpha+h_1,\dots,\alpha+h_k$ are prime.  This is our goal, and it is where the art of choosing the weights $\lambda_{\mathfrak{d}_1,\dots,\mathfrak{d}_k}$ comes into play.  We begin by making some assumptions regarding their support.  In particular, given $\mathfrak{d}_1,\dots,\mathfrak{d}_k$, define $\mathfrak{d}:=\prod_{i=1}^k \mathfrak{d}_i$, and set $\lambda_{\mathfrak{d}_1,\dots,\mathfrak{d}_k}=0$ unless $(\mathfrak{d},\mathfrak{w})=1$, $\mathfrak{d}$ is squarefree, and $|\mathfrak{d}|\leq R$, where $R$ will be chosen later to be a small power of $|A(N)|$.  The main result of this section is the following.

\begin{proposition}
\label{prop:main}
Suppose that the primes $P$ have level of distribution $\theta>0$, and set $R=|A(N)|^{\theta/2 - \delta}$ for some small $\delta>0$.  Given a piecewise differentiable function $F\colon [0,1]^k \to \mathbb{R}$ supported on the simplex $\mathcal{R}_k:=\{(x_1,\dots,x_k)\in[0,1]^k: x_1+\dots+x_k \leq 1\}$, let $F_{\mathrm{max}}:=\sup_{(t_1, \dots, t_k) \in [0,1]^k} |F(t_1, \dots, t_k)|
+
\sum_{i=1}^k \left|\frac{\partial F}{\partial x_i}(t_1, \dots, t_k)  \right|$.  If we set
\[
\lambda_{\mathfrak{d}_1,\dots,\mathfrak{d}_k} := \left(\prod_{i=1}^k \mu(\mathfrak{d}_i)|\mathfrak{d}_i|\right) \sum_{\substack{\mathfrak{r}_1,\dots,\mathfrak{r}_k \\ \mathfrak{d}_i\mid\mathfrak{r}_i\,\forall i \\ (\mathfrak{r}_i,\mathfrak{w})=1 \,\forall i }} \frac{\mu(\mathfrak{r}_1\dots\mathfrak{r}_k)^2}{\prod_{i=1}^k \varphi(\mathfrak{r_i})} F\left( \frac{\log|\mathfrak{r}_1|}{\log R},\dots, \frac{\log |\mathfrak{r}_k|}{\log R}\right)
\]
whenever $|\mathfrak{d}_1\dots\mathfrak{d}_k|<R$ and $(\mathfrak{d}_1\dots\mathfrak{d}_k,\mathfrak{w})=1$, and $\lambda_{\mathfrak{d}_1,\dots,\mathfrak{d}_k}=0$ otherwise, then
\[
S_1 = \frac{(1+o(1)) \varphi(\mathfrak{w})^k |A(N)| (c_A \log R)^k}{|\mathfrak{w}|^{k+1}}I_k(F)
\]
and
\[
S_2 = \frac{(1+o(1)) \varphi(\mathfrak{w})^k |P(N)| (c_A \log R)^{k+1}}{|\mathfrak{w}|^{k+1}} \sum_{m=1}^k J_k^{(m)}(F)
\]
where $c_A$ is the residue at $s=1$ of $\zeta_A(s)$,
\[
I_k(F) := \idotsint_{\mathcal{R}_k} F(x_1,\dots,x_k)^2 \,dx_1 \dots dx_k
\]
and
\[
J_k^{(m)}(F) := \idotsint_{[0,1]^{k-1}} \left( \int_0^1 F(x_1,\dots,x_k) \, dx_m\right)^2 dx_1 \dots dx_{m-1}dx_{m+1}\dots dx_k.
\]
\end{proposition}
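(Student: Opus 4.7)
The plan is to follow Maynard's original argument for $\mathbb{Z}$ (Propositions 4.1/4.2 of \cite{Maynard2013}) essentially verbatim, using the dictionary of Section \ref{subsec:dictionary} to replace each arithmetic input by its analogue in $A$. In broad strokes, the proof has three stages: (1) open the squares in $S_1$ and $S_2$ and carry out the interior count over $\alpha \in A(N)$ with prescribed divisibility; (2) change variables from $\lambda_{\mathfrak{d}_1,\dots,\mathfrak{d}_k}$ to new variables $y_{\mathfrak{r}_1,\dots,\mathfrak{r}_k}$ that diagonalize the resulting quadratic forms; (3) replace the remaining diagonal sum by a Riemann integral via an asymptotic formula for a multiplicative function summed over ideals.

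For $S_1$, after expanding the square, the interior count of $\alpha \in A(N)$ with $\alpha \equiv v_0 \pmod{\mathfrak{w}}$ and $[\mathfrak{d}_i,\mathfrak{e}_i] \mid \alpha + h_i$ equals (by CRT, using squarefreeness and coprimality to $\mathfrak{w}$) $|A(N)|/(|\mathfrak{w}| \prod |[\mathfrak{d}_i,\mathfrak{e}_i]|) + O(|\partial A(N,\mathfrak{q})|)$, where $|\mathfrak{q}| \leq |\mathfrak{w}| R^2$. Since $R^2 \leq |A(N)|^{\theta - 2\delta}$ and $|\mathfrak{w}|$ grows slower than any power of $|A(N)|$, the power-savings bound for $|\partial A(N,\mathfrak{q})|$ makes the total error $o(|A(N)|)$ after summing with the (easily bounded) weights. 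The main term is
\[
\frac{|A(N)|}{|\mathfrak{w}|} \sum_{\mathfrak{d}_1,\dots,\mathfrak{d}_k} \sum_{\mathfrak{e}_1,\dots,\mathfrak{e}_k} \frac{\lambda_{\mathfrak{d}_1,\dots,\mathfrak{d}_k}\,\lambda_{\mathfrak{e}_1,\dots,\mathfrak{e}_k}}{\prod_i |[\mathfrak{d}_i,\mathfrak{e}_i]|}.
\]
Maynard's substitution, adapted to our setting as
\[
y_{\mathfrak{r}_1,\dots,\mathfrak{r}_k} := \Bigl(\prod_{i=1}^k \mu(\mathfrak{r}_i)\varphi(\mathfrak{r}_i)\Bigr) \sum_{\mathfrak{r}_i \mid \mathfrak{d}_i \, \forall i} \frac{\lambda_{\mathfrak{d}_1,\dots,\mathfrak{d}_k}}{\prod_i |\mathfrak{d}_i|},
\]
diagonalizes this form, reducing $S_1$ to a sum over $k$-tuples $(\mathfrak{u}_1,\dots,\mathfrak{u}_k)$ of pairwise-coprime squarefree ideals coprime to $\mathfrak{w}$, weighted by $\prod \varphi(\mathfrak{u}_i)^{-1}$. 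For $S_2$ the analogous step is identical, except the interior count is now over primes $\alpha+h_m \in P(N)$ in the arithmetic progression determined by $\mathfrak{w}$ and the $[\mathfrak{d}_i,\mathfrak{e}_i]$. The level-of-distribution hypothesis, applied uniformly over $|\mathfrak{q}| \leq |\mathfrak{w}| R^2 \leq |A(N)|^\theta$, absorbs all errors into $o(|P(N)|\log^k R / |\mathfrak{w}|)$, and the same change of variables reduces each of the $k$ terms $S_2^{(m)}$ to a diagonal sum weighted by $\prod_{i \neq m} \varphi(\mathfrak{u}_i)^{-1}$ times $|\mathfrak{u}_m|/\varphi(\mathfrak{u}_m)^2$ (effectively).

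The final step is to convert each diagonal sum to the advertised integral $I_k(F)$ or $J_k^{(m)}(F)$. The key analytic input, replacing Maynard's Lemma 6.2, is an asymptotic of the form
\[
\sum_{\substack{|\mathfrak{u}| \leq R \\ (\mathfrak{u},\mathfrak{w})=1,\ \mathfrak{u}\,\text{sqfree}}} \frac{\mu^2(\mathfrak{u})\, G(\log|\mathfrak{u}|/\log R)}{\varphi(\mathfrak{u})} \;\sim\; \frac{c_A\,\varphi(\mathfrak{w})}{|\mathfrak{w}|}\,\log R \int_0^1 G(x)\,dx
\]
for piecewise-continuous $G$ on $[0,1]$, which follows from the pole of $\zeta_A(s)$ at $s=1$ with residue $c_A$ via a standard Perron/contour argument. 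Iterating in the $k$ variables $\mathfrak{u}_1,\dots,\mathfrak{u}_k$ yields the claimed $(c_A \log R)^k I_k(F)$ and $(c_A \log R)^{k+1} J_k^{(m)}(F)$. The hardest part of the proof is the last input in the function field case: since $|\mathfrak{u}|$ takes values only on the lattice $\{q^n\}$, one must verify that the Riemann sum converges to the integral uniformly in $q$ (so that $k_0(m)$ is independent of $q$, as claimed in the Remark following Theorem \ref{thm:function_field}). A standard tauberian argument using $\zeta_A(s) = (1-q^{1-s})^{-1}$, together with the observation that the step size $(\log q)/\log R$ can be made as small as desired by taking $N$ large (for fixed $q$) and that $F$ is piecewise differentiable with $F_{\mathrm{max}}$ bounded, handles this uniformly.
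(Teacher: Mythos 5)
Your blueprint---open the squares and perform the interior count, diagonalize via the change of variables to $y_{\mathfrak{r}_1,\dots,\mathfrak{r}_k}$, then convert the diagonal sums to integrals via a one-variable summation asymptotic applied iteratively---matches the paper's proof structure exactly (the paper packages these as Lemmas \ref{lem:maynard_first_sum}, \ref{lem:maynard_second_sum}, \ref{lem:y^{(m)}}, and \ref{lem:summation}). Three remarks on where you diverge or gloss. First, for the summation asymptotic, you propose a Perron/contour argument using the simple pole of $\zeta_A(s)$ at $s=1$; this is problematic in the function field case because $\zeta_A(s) = (1-q^{1-s})^{-1}$ has poles at $s = 1 + 2\pi i n/\log q$ for \emph{all} $n \in \mathbb{Z}$, so there is no zero-free vertical strip to shift into and no single residue to extract. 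The paper avoids this entirely by following the elementary Halberstam--Richert (Levin--Fa\u{\i}nle\u{\i}b type) argument, which works uniformly for number fields and function fields and needs only the Mertens-type hypothesis on $\gamma(\mathfrak{p})$. Second, your description of the $S_2^{(m)}$ diagonal weights as ``$\prod_{i\ne m}\varphi(\mathfrak{u}_i)^{-1}$ times $|\mathfrak{u}_m|/\varphi(\mathfrak{u}_m)^2$ (effectively)'' elides two genuine steps in the paper: the diagonalization for $S_2$ uses the multiplicative function $g$ with $g(\mathfrak{p}) = |\mathfrak{p}| - 2$ (arising from $\varphi([\mathfrak{d}_i,\mathfrak{e}_i])^{-1} = \varphi(\mathfrak{d}_i)^{-1}\varphi(\mathfrak{e}_i)^{-1}\sum_{\mathfrak{u}_i\mid\mathfrak{d}_i\mathfrak{e}_i}g(\mathfrak{u}_i)$), and one then needs Lemma \ref{lem:y^{(m)}} to express $y^{(m)}_{\mathfrak{r}_1,\dots,\mathfrak{r}_k}$ in terms of $y_{\mathfrak{r}_1,\dots,\mathfrak{r}_k}$ by summing out the $m$th coordinate with weight $1/\varphi$; you would need to supply these. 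Third, the uniformity-in-$q$ concern you raise at the end is a red herring for this proposition: the $o(1)$ terms in Proposition \ref{prop:main} need only tend to zero for each \emph{fixed} ring $A$ as $N\to\infty$, and the independence of $k_0(m)$ from $q$ in Theorem \ref{thm:function_field} is obtained downstream in Corollary \ref{cor:maincor} from the facts that $M_k$ is a purely analytic constant, that $\theta = 1/2$ uniformly (Theorem \ref{thm:hayes}), and that $\Delta = c_A\lim_N |P(N)|\log|A(N)|/|A(N)| = 1$ for every $\mathbb{F}_q[t]$.
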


Before we can prove Proposition \ref{prop:main}, we first show that, by diagonalizing the quadratic form, we can rewrite $S_1$ and $S_2$.  We begin with $S_1$.

\begin{lemma} \label{lem:maynard_first_sum}
For ideals $\mathfrak{r}_1,\dots,\mathfrak{r}_k$, let
\[
y_{\mathfrak{r}_1, \dots, \mathfrak{r}_k} =
\left(
  \prod_{i=1}^k \mu(\mathfrak{r}_i) \varphi(\mathfrak{r}_i)
\right)
\sum_{\substack{\mathfrak{d}_1, \dots, \mathfrak{d}_k \\ \mathfrak{r}_i \mid \mathfrak{d}_i\,\forall i}}
  \frac{\lambda_{\mathfrak{d}_1,  \dots, \mathfrak{d}_k}}{\prod_{i=1}^k |\mathfrak{d}_i|},
\]
and set $y_{\max} = \sup_{\mathfrak{r}_1, \dots, \mathfrak{r}_k} |y_{\mathfrak{r}_1, \dots, \mathfrak{r}_k}|$. If $R = |A(N)|^{1/2-\delta}$ for some $\delta>0$, then
\[
S_1 = \frac{|A(N)|}{|\mathfrak{w}|}
\sum_{\mathfrak{r}_1, \dots, \mathfrak{r}_k} \frac{y^2_{\mathfrak{r}_1, \dots, \mathfrak{r}_k}}{\prod_{i=1}^k \varphi(\mathfrak{r}_i)}
+ O \left(
  \frac{y^2_{\max} |A(N)| \phi(\mathfrak{w})^k (\log R)^k}{|\mathfrak{w}|^{k+1} D_0}
\right).
\]
\end{lemma}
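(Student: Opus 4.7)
The plan is to follow Maynard's Selberg sieve diagonalization, adapted from $\mathbb{Z}$ to the general setting $A$ using the dictionary of Section \ref{subsec:dictionary}. The only arithmetic input is the counting estimate for $|\{\alpha \in A(N): \alpha \equiv \alpha_0 \pmod{\mathfrak{q}}\}|$; the rest is formal multiplicative-function manipulation that depends only on $A$ being a Dedekind domain where the relevant M\"obius and $\varphi$ identities hold.

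First I would expand the square defining $S_1$ and interchange the order of summation to obtain
\[
S_1 = \sum_{\mathfrak{d}_1,\dots,\mathfrak{d}_k} \sum_{\mathfrak{e}_1,\dots,\mathfrak{e}_k} \lambda_{\mathfrak{d}_1,\dots,\mathfrak{d}_k}\lambda_{\mathfrak{e}_1,\dots,\mathfrak{e}_k}\, \# T(\vec{\mathfrak{d}},\vec{\mathfrak{e}}),
\]
where $T(\vec{\mathfrak{d}},\vec{\mathfrak{e}})$ is the set of $\alpha \in A(N)$ with $\alpha \equiv v_0 \pmod{\mathfrak{w}}$ and $[\mathfrak{d}_i,\mathfrak{e}_i] \mid \alpha+h_i$ for each $i$. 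Because $\lambda$ is supported on tuples with $\mathfrak{d}_1\cdots\mathfrak{d}_k$ squarefree and coprime to $\mathfrak{w}$, the same holds for $\mathfrak{e}$, and the Chinese remainder theorem combines all of these divisibility conditions into a single congruence for $\alpha$ modulo $\mathfrak{q}:=\mathfrak{w}\prod_i [\mathfrak{d}_i,\mathfrak{e}_i]$. Applying the dictionary count yields
\[
\#T(\vec{\mathfrak{d}},\vec{\mathfrak{e}}) = \frac{|A(N)|}{|\mathfrak{w}|\prod_i |[\mathfrak{d}_i,\mathfrak{e}_i]|} + O(|\partial A(N,\mathfrak{q})|).
\]

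Second, I would process the main term by passing from $\lambda$ to the $y$-variables. The defining relation for $y$ can be M\"obius-inverted, giving (for admissible squarefree $\mathfrak{d}_i$'s coprime to $\mathfrak{w}$) the identity
\[
\frac{\lambda_{\mathfrak{d}_1,\dots,\mathfrak{d}_k}}{\prod_i |\mathfrak{d}_i|} = \Bigl(\prod_i \mu(\mathfrak{d}_i)\Bigr) \sum_{\substack{\mathfrak{r}_1,\dots,\mathfrak{r}_k \\ \mathfrak{d}_i\mid \mathfrak{r}_i\,\forall i}} \frac{y_{\mathfrak{r}_1,\dots,\mathfrak{r}_k}}{\prod_i \varphi(\mathfrak{r}_i)},
\]
exactly as in Maynard, with the inversion relying on the identity $\sum_{\mathfrak{a}\mid\mathfrak{b}}\mu(\mathfrak{a})\varphi(\mathfrak{a})=\mu(\mathfrak{b})|\mathfrak{b}|$ for squarefree $\mathfrak{b}$. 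Plugging this into the main term and using the standard Selberg diagonalization identity, whose kernel is $1/|[\mathfrak{d}_i,\mathfrak{e}_i]|=|(\mathfrak{d}_i,\mathfrak{e}_i)|/(|\mathfrak{d}_i||\mathfrak{e}_i|)$ expanded via $|\mathfrak{a}|/\varphi(\mathfrak{a})=\sum_{\mathfrak{b}\mid \mathfrak{a}}1/\varphi(\mathfrak{b})$, forces the variables $\mathfrak{r}_i$ arising from the two factors to coincide and collapses everything to the diagonal $\frac{|A(N)|}{|\mathfrak{w}|}\sum_{\mathfrak{r}_1,\dots,\mathfrak{r}_k} y_{\mathfrak{r}_1,\dots,\mathfrak{r}_k}^2/\prod_i \varphi(\mathfrak{r}_i)$.

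Third, I would control the error. Crudely bounding $|\lambda_{\mathfrak{d}_1,\dots,\mathfrak{d}_k}| \ll y_{\max}(\log R)^k \varphi(\mathfrak{w})^k/|\mathfrak{w}|^k$ from its definition, the total error is
\[
\ll \frac{y_{\max}^2 \varphi(\mathfrak{w})^{2k}(\log R)^{2k}}{|\mathfrak{w}|^{2k}} \sum_{\substack{|\mathfrak{d}|,|\mathfrak{e}|\leq R \\ (\mathfrak{d}\mathfrak{e},\mathfrak{w})=1}} |\partial A(N,\mathfrak{w}[\mathfrak{d},\mathfrak{e}])|.
\]
A Mertens-type calculation extracts the expected $\varphi(\mathfrak{w})^k(\log R)^k/(|\mathfrak{w}|^{k+1}D_0)$ shape, the $1/D_0$ saving coming from the coprimality with $\mathfrak{w}=\prod_{|\mathfrak{p}|<D_0}\mathfrak{p}$.

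The combinatorial diagonalization in the second step is routine once one accepts the multiplicative-function identities, which transfer verbatim to any Dedekind ring. The real point of attention, and the main obstacle, is the error term: when $A=\mathbb{Z}$ or $A=\mathbb{F}_q[t]$ one has $|\partial A(N,\mathfrak{q})|=O(1)$, and the constraint $R^2\leq |A(N)|^{1-2\delta}$ trivially absorbs the sum; but when $A=\mathcal{O}_K$, one must exploit the power-saving bound $|\partial A(N,\mathfrak{q})| \ll (|A(N)|/|\mathfrak{q}|)^{1-\nu}$ from the dictionary, and it is precisely this that requires the choice $R=|A(N)|^{1/2-\delta}$ with $\delta>0$ rather than the borderline $\delta=0$.
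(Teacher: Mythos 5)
Your setup — expanding the square, using CRT to combine the congruences, and applying the dictionary count with error $|\partial A(N,\mathfrak{q})|$ — matches the paper. But there is a genuine gap in your treatment of the main term, and as a consequence your accounting of the error is misdirected.

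After Möbius inversion, the main term is
\[
\frac{|A(N)|}{|\mathfrak{w}|}\sideset{}{'}\sum_{\vec{\mathfrak{d}},\vec{\mathfrak{e}}}\frac{\lambda_{\vec{\mathfrak{d}}}\lambda_{\vec{\mathfrak{e}}}}{\prod_i|[\mathfrak{d}_i,\mathfrak{e}_i]|},
\]
where the prime restricts to those tuples for which the moduli $\mathfrak{w},[\mathfrak{d}_1,\mathfrak{e}_1],\dots,[\mathfrak{d}_k,\mathfrak{e}_k]$ are pairwise coprime. The support of $\lambda$ already forces the $\mathfrak{d}_i$'s (and separately the $\mathfrak{e}_i$'s) to be pairwise coprime to each other and to $\mathfrak{w}$, but the cross-conditions $(\mathfrak{d}_i,\mathfrak{e}_j)=1$ for $i\neq j$ are an \emph{additional} constraint. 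If you drop that constraint, the multiplicative bookkeeping does indeed collapse the sum exactly to $\sum_{\vec{\mathfrak{r}}} y_{\vec{\mathfrak{r}}}^2/\prod_i\varphi(\mathfrak{r}_i)$, as you assert — but then you have computed the wrong sum. The correct approach (as in Maynard and the paper) is to encode each constraint $(\mathfrak{d}_i,\mathfrak{e}_j)=1$ via $\sum_{\mathfrak{s}_{i,j}\mid(\mathfrak{d}_i,\mathfrak{e}_j)}\mu(\mathfrak{s}_{i,j})$, producing a sum over auxiliary ideals $\mathfrak{s}_{i,j}$. The $\mathfrak{s}_{i,j}=1$ terms give your claimed diagonal; the $\mathfrak{s}_{i,j}\neq 1$ terms are nonzero and contribute precisely the $O\bigl(y_{\max}^2|A(N)|\varphi(\mathfrak{w})^k(\log R)^k/(|\mathfrak{w}|^{k+1}D_0)\bigr)$ error of the lemma statement, the $1/D_0$ savings coming from $|\mathfrak{s}_{i,j}|>D_0$ when $\mathfrak{s}_{i,j}\neq 1$.

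This means your error accounting is also off: the boundary term $|\partial A(N,\mathfrak{w}\prod_i[\mathfrak{d}_i,\mathfrak{e}_i])|$, after the $\tau_{3k}$ divisor bound and a Mertens estimate, yields an error $\ll\lambda_{\max}^2|A(N)|(\log R)^{3k}(|A(N)|/R^2)^{-\nu}$, which is a power saving in $N$ (from $R=|A(N)|^{1/2-\delta}$, $\delta>0$) and therefore negligible compared with the $1/D_0$-saving error in the lemma. You attribute the lemma's stated error to the boundary term, which conflates two sources of very different sizes. Two small additional slips: the identity you cite, $\sum_{\mathfrak{a}\mid\mathfrak{b}}\mu(\mathfrak{a})\varphi(\mathfrak{a})=\mu(\mathfrak{b})|\mathfrak{b}|$, is false (try $\mathfrak{b}=\mathfrak{p}$: LHS $=2-|\mathfrak{p}|$, RHS $=-|\mathfrak{p}|$) — the inversion you write is correct, but it follows from ordinary one-variable-at-a-time M\"obius inversion, not that identity; and the claimed bound $\lambda_{\max}\ll y_{\max}(\log R)^k\varphi(\mathfrak{w})^k/|\mathfrak{w}|^k$ carries an unjustified extra $\varphi(\mathfrak{w})^k/|\mathfrak{w}|^k$ not present in the standard estimate $\lambda_{\max}\ll y_{\max}(\log R)^k$.
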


\begin{remark}
The change of variables to $y_{\mathfrak{r}_1,\dots,\mathfrak{r}_k}$ is invertible, the proof of which relies only on elementary manipulations  (see \cite[p. 9]{Maynard2013}).
\end{remark}

\begin{proof}[Proof of Lemma \ref{lem:maynard_first_sum}]
We begin by expanding the square and interchanging the order of summation to obtain
\begin{align*}
S_1
	&= \sum_{\mathfrak{d}_1,\dots,\mathfrak{d}_k} \sum_{\mathfrak{e}_1,\dots,\mathfrak{e}_k} \lambda_{\mathfrak{d}_1,\dots,\mathfrak{d}_k} \lambda_{\mathfrak{e}_1,\dots,\mathfrak{e}_k} \sum_{\substack{ \alpha\in A(N) \\ \alpha\equiv v_0\pmod{\mathfrak{w}} \\ \alpha\equiv -h_i \pmod{[\mathfrak{d}_i,\mathfrak{e}_i]}\,\forall i}} 1 \\
&= \frac{|A(N)|}{|\mathfrak{w}|} \sideset{}{^\prime}\sum_{\substack{\mathfrak{d}_1,\dots,\mathfrak{d}_k \\ \mathfrak{e}_1,\dots,\mathfrak{e}_k}} \frac{\lambda_{\mathfrak{d}_1,\dots,\mathfrak{d}_k} \lambda_{\mathfrak{e}_1,\dots,\mathfrak{e}_k}}{\prod_{i=1}^k|[\mathfrak{d}_i,\mathfrak{e}_i]|} + O\left( \lambda_{\max}^2 \sideset{}{^\prime}\sum_{\substack{\mathfrak{d}_1,\dots,\mathfrak{d}_k \\ \mathfrak{e}_1,\dots,\mathfrak{e}_k}} |\partial A(N,\mathfrak{w}\prod_{i=1}^{k}[\mathfrak{d}_i,\mathfrak{e}_i])| \right).
\end{align*}
Here the $\prime$ on the summation indicates it is to be taken over those $\mathfrak{d}_1,\dots,\mathfrak{d}_k,\mathfrak{e}_1,\dots,\mathfrak{e}_k$ for which the congruence conditions modulo $\mathfrak{w}, [\mathfrak{d}_1,\mathfrak{e}_1],\dots,[\mathfrak{d}_k,\mathfrak{e}_k]$ admit a simultaneous solution; note that in that case, $\mathfrak{w}, [\mathfrak{d}_1,\mathfrak{e}_1],\dots,[\mathfrak{d}_k,\mathfrak{e}_k]$ are pairwise coprime. Now recall from \S\ref{subsec:dictionary} that $|\partial A(N,\mathfrak{q})| \ll (|A(N)|/|\mathfrak{q}|)^{1-\nu}$ for some $\nu > 0$, provided that $|A(N)| \geq |\mathfrak{q}|$. This implies that the above error is
\[ \ll \lambda_{\max}^2  \cdot |A(N)|^{1-\nu} \sideset{}{^\prime}\sum_{\substack{\mathfrak{d}_1,\dots,\mathfrak{d}_k \\ \mathfrak{e}_1,\dots,\mathfrak{e}_k}} \frac{1}{\prod_{i=1}^{k} |[\mathfrak{d}_i, \mathfrak{e}_i]|^{1-\nu}}. \]
For each $\mathfrak{q}$, the number of ways of choosing $\mathfrak{d}_1, \dots, \mathfrak{d}_k$ and $\mathfrak{e}_1, \dots, \mathfrak{e}_k$ so that $\prod_{i=1}^{k} [\mathfrak{d}_i, \mathfrak{e}_i] = \mathfrak{q}$ is at most $\tau_{3k}(\mathfrak{q})$. Hence our error is
\begin{multline*} \ll \lambda_{\max}^2 |A(N)|^{1-\nu} \cdot \sum_{|\mathfrak{q}| \leq R^2} \frac{\mu^2(\mathfrak{q})\tau_{3k}(\mathfrak{q})}{|\mathfrak{q}|^{1-\nu}} \leq \lambda_{\max}^2 |A(N)|^{1-\nu} \cdot R^{2\nu} \prod_{|\mathfrak{p}| \leq R^2} \left(1+\frac{3k}{|\mathfrak{p}|}\right) \\ \ll \lambda_{\max}^2 |A(N)|^{1-\nu} \cdot R^{2\nu} (\log{R})^{3k} = \lambda_{\max}^2 |A(N)| (\log{R})^{3k} \cdot (|A(N)|/R^2)^{-\nu}. \end{multline*}
(To go from the first line to the second, we used a version of Mertens' theorem for global fields. See, for example, \cite{Rosen99}. This sort of estimation of sums by Euler products will be used frequently in what follows without further comment.)
Because $R = |A(N)|^{1/2-\delta}$, this error is negligible compared to the error claimed in the statement of the lemma.

We now focus our attention on the main term. Following Maynard's manipulations to uncouple the interdependence of $\mathfrak{d}_i$ and $\mathfrak{e}_j$ and making the change of variables indicated in the statement of the lemma, the main term becomes
\[
\frac{|A(N)|}{|\mathfrak{w}|}
	\sum_{\mathfrak{u}_1,\dots,\mathfrak{u}_k} \left(\prod_{i=1}^k\frac{\mu(\mathfrak{u}_i)^2}{\varphi(\mathfrak{u}_i)}\right)
	\sideset{}{^*}\sum_{\mathfrak{s}_{1,2},\dots,\mathfrak{s}_{k,k-1}} \left(\prod_{\begin{subarray}{c}1\leq i,j\leq k \\ i\neq j \end{subarray}} \frac{\mu(\mathfrak{s}_{i,j})}{\varphi(\mathfrak{s}_{i,j})^2}\right)
	y_{\mathfrak{a}_1,\dots,\mathfrak{a}_k}y_{\mathfrak{b}_1,\dots,\mathfrak{b}_k},
\]
and we note, for consideration of the error term, that $\lambda_{\max} \ll y_{\max} \log^k R$.  In the above, $\mathfrak{a}_i:=\mathfrak{u}_i\prod_{j\neq i}\mathfrak{s}_{i,j}$, $\mathfrak{b}_j:=\mathfrak{u}_j\prod_{i\neq j} \mathfrak{s}_{i,j}$, and the $*$ on the summation indicates it is to be taken over $\mathfrak{s}_{i,j}$ such that $(\mathfrak{s}_{i,j},\mathfrak{u}_i)=(\mathfrak{s}_{i,j},\mathfrak{u}_j)=1=(\mathfrak{s}_{i,j},\mathfrak{s}_{a,j})=(\mathfrak{s}_{i,j},\mathfrak{s}_{i,b})$ for all $a\neq i$, $b\neq j$.  Moreover, considering the support of the $y$'s, if some $\mathfrak{s}_{i,j} \neq 1$, then $|\mathfrak{s}_{i,j}|>D_0$ owing to the fact that $(\mathfrak{s}_{i,j},\mathfrak{w})=1$.  The contribution in that case is at most
\begin{align}\label{eq:S1finalestimate}
\frac{y_{\max}^2|A(N)|}{|\mathfrak{w}|}
	\left( \sum_{\begin{subarray}{c}|\mathfrak{u}|\leq R \\ (\mathfrak{u},\mathfrak{w})=1 \end{subarray}} \frac{\mu(\mathfrak{u})^2}{\varphi(\mathfrak{u})}\right)^k \!\!\!\!
	\left(\sum_{|\mathfrak{s}_{i,j}|>D_0} \frac{\mu(\mathfrak{s}_{i,j})^2}{\varphi(\mathfrak{s}_{i,j})^2} \right) \!\!
	\left( \sum_{\mathfrak{s}\subseteq A} \frac{\mu(\mathfrak{s})^2}{\varphi(\mathfrak{s})^2} \right)^{k^2-k-1}
\!\!\!\!\!\!\!\!\ll \!\!
	\frac{y_{\max}^2|A(N)| \varphi(\mathfrak{w})^k \log^k R}{|\mathfrak{w}|^{k+1} D_0}.
\end{align}
We may thus restrict our attention only to those terms arising from $\mathfrak{s}_{i,j}=1$ for all $i\neq j$.  The lemma follows.
\end{proof}

We now turn to $S_2$, the handling of which will require more delicate information than was needed for $S_1$.  We first define, for $1 \leq m \leq k$, the component sums
\[
S_2^{(m)} := \sum_{\begin{subarray}{c} \alpha \in A(N) \\ \alpha \equiv v_0\pmod{\mathfrak{w}} \end{subarray}} \chi_P(\alpha+h_m) \left(\sum_{\begin{subarray}{c} \mathfrak{d}_1,\dots,\mathfrak{d}_k: \\ \mathfrak{d_i} \mid (\alpha+h_i)\,\forall i \end{subarray}} \lambda_{\mathfrak{d}_1,\dots,\mathfrak{d}_k} \right)^2,
\]
so that $S_2 = \sum_{m=1}^k S_2^{(m)}$. To rewrite $S_2^{(m)}$ in a manner similar to what was done with $S_1$, we need information about how the primes are distributed in arithmetic progressions.  Specifically, we will need the assumption that $P$ has level of distribution $\theta>0$.

\begin{lemma} \label{lem:maynard_second_sum}
Assume that $P$ has level of distribution $\theta > 0$ and that $R = |A(N)|^{\theta/2 - \varepsilon}.$ Let
\[
y^{(m)}_{\mathfrak{r}_1, \dots, \mathfrak{r}_k} =
\left(
  \prod_{i=1}^k \mu(\mathfrak{r}_i) g(\mathfrak{r}_i)
\right)
\sum_{\substack{\mathfrak{d}_1, \dots, \mathfrak{d}_k \\ \mathfrak{r}_1 | \mathfrak{d}_i\,\forall i \\ \mathfrak{d}_m = 1}}
\frac{\lambda_{\mathfrak{d}_1, \dots, \mathfrak{d}_k}}{\prod_{i=1}^k \varphi(\mathfrak{d}_i)},
\]
where $g$ is the multiplicative function defined by $g(\mathfrak{p}) = |\mathfrak{p}|-2$ for all prime ideals $\mathfrak{p}$ of $A$.
Let $y^{(m)}_{max} = \sup_{\mathfrak{r}_1, \dots, \mathfrak{r}_k} |y^{(m)}_{\mathfrak{r}_1, \dots, \mathfrak{r}_k}|$. Then, for any fixed $B>0$ we have
\begin{align*}
S_2^{(m)} & =
\frac{|P(N)|}{\varphi(\mathfrak{w})}
\sum_{\mathfrak{r}_1, \dots, \mathfrak{r}_k}\frac{(y^{(m)}_{\mathfrak{r}_1, \dots, \mathfrak{r}_k})^2}{\prod_{i=1}^k g(\mathfrak{r}_i)} \\
&+ O\left(
  \frac{(y^{(m)}_{max})^2 \varphi(\mathfrak{w})^{k-2} |A(N)| (\log N)^{k-2}}{|\mathfrak{w}|^{k-1} D_0}
\right)
+ O\left(
\frac{y^2_{max} |A(N)|}{(\log N)^B}
\right).
\end{align*}
\end{lemma}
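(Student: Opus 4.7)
The plan is to imitate the strategy used for $S_1$ in Lemma~\ref{lem:maynard_first_sum}, now bringing the level of distribution hypothesis for $P$ into play.  I would first expand the square in $S_2^{(m)}$ and swap the order of summation to obtain a double outer sum over $(\mathfrak{d}_i)$ and $(\mathfrak{e}_i)$ with an inner count of $\alpha\in A(N)$ satisfying $\alpha\equiv v_0\pmod{\mathfrak{w}}$, $[\mathfrak{d}_i,\mathfrak{e}_i]\mid\alpha+h_i$ for all $i$, and $\chi_P(\alpha+h_m)=1$.  The preliminary observation is that $\chi_P(\alpha+h_m)=1$ combined with $[\mathfrak{d}_m,\mathfrak{e}_m]\mid\alpha+h_m$ and the bound $|[\mathfrak{d}_m,\mathfrak{e}_m]|\leq R^2 < |\alpha+h_m|$ (valid for $N$ large, since $R=|A(N)|^{\theta/2-\varepsilon}$ while $|\alpha+h_m|$ is of order $|A(N)|$) forces $\mathfrak{d}_m=\mathfrak{e}_m=(1)$, so only such tuples contribute.

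For each such tuple the Chinese remainder theorem collapses the congruences into a single condition $\alpha\equiv\alpha_0\pmod{\mathfrak{q}}$ with $\mathfrak{q}:=\mathfrak{w}\prod_{i\neq m}[\mathfrak{d}_i,\mathfrak{e}_i]$ and $(\alpha_0,\mathfrak{q})=1$, so that the inner sum equals $|P(N;\mathfrak{q},\alpha_0)|=|P(N)|/\varphi(\mathfrak{q})+\mathcal{E}(N;\mathfrak{q},\alpha_0)$.  This splits $S_2^{(m)}$ into the putative main term
\[
\frac{|P(N)|}{\varphi(\mathfrak{w})}\sum_{\substack{\mathfrak{d}_1,\ldots,\mathfrak{d}_k\\ \mathfrak{e}_1,\ldots,\mathfrak{e}_k\\ \mathfrak{d}_m=\mathfrak{e}_m=(1)}}\frac{\lambda_{\mathfrak{d}_1,\ldots,\mathfrak{d}_k}\lambda_{\mathfrak{e}_1,\ldots,\mathfrak{e}_k}}{\prod_{i\neq m}\varphi([\mathfrak{d}_i,\mathfrak{e}_i])}
\]
and an error term which I would bound by $\lambda_{\max}^2$ times a sum over $|\mathfrak{q}|\leq R^2|\mathfrak{w}|\leq|A(N)|^\theta$ of $\tau_{3(k-1)}(\mathfrak{q}/\mathfrak{w})\max_{(\alpha_0,\mathfrak{q})=1}|\mathcal{E}(N;\mathfrak{q},\alpha_0)|$, the divisor weight counting the representations $\mathfrak{q}/\mathfrak{w}=\prod_{i\neq m}[\mathfrak{d}_i,\mathfrak{e}_i]$.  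Using $\lambda_{\max}\ll y_{\max}(\log R)^k$ and Cauchy--Schwarz against the level of distribution hypothesis (interpolating between the trivial pointwise bound $\mathcal{E}\ll|P(N)|/\varphi(\mathfrak{q})$ and the average bound with a sufficiently inflated exponent $B'$) should yield the claimed $O(y_{\max}^2|A(N)|/(\log N)^B)$ error.

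Finally, I would diagonalize the main term using the Selberg-style identity
\[
\frac{1}{\varphi([\mathfrak{d}_i,\mathfrak{e}_i])}=\frac{1}{\varphi(\mathfrak{d}_i)\varphi(\mathfrak{e}_i)}\prod_{\mathfrak{p}\mid(\mathfrak{d}_i,\mathfrak{e}_i)}\bigl(g(\mathfrak{p})+1\bigr),
\]
which holds for squarefree $\mathfrak{d}_i,\mathfrak{e}_i$ since $\varphi(\mathfrak{p})=g(\mathfrak{p})+1$.  Expanding the product over primes introduces auxiliary squarefree ideals $\mathfrak{s}_{i,j}$ and allows the $\mathfrak{d}_i$'s to be uncoupled from the $\mathfrak{e}_j$'s in exactly the manner of Lemma~\ref{lem:maynard_first_sum}, producing the change of variables $y^{(m)}_{\mathfrak{r}_1,\ldots,\mathfrak{r}_k}$; the function $g$ appears in the denominator of the diagonalized form precisely because of this identity.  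Off-diagonal contributions with some $\mathfrak{s}_{i,j}\neq(1)$ satisfy $|\mathfrak{s}_{i,j}|>D_0$ by coprimality with $\mathfrak{w}$, and their total contribution is controlled by the same Mertens-type Euler product estimates used for $S_1$, producing the first big-$O$ error.  The main obstacle will be the error analysis in the preceding paragraph: one must verify that $R^2|\mathfrak{w}|\leq|A(N)|^\theta$ and that the $\tau_{3(k-1)}$ weights do not spoil the level-of-distribution savings, both of which are handled by choosing $\varepsilon$ and the growth rate of $D_0$ appropriately.  Otherwise the computation is mechanically parallel to that of Lemma~\ref{lem:maynard_first_sum}, transplanted to the general setting via the dictionary of Section~\ref{subsec:dictionary}.
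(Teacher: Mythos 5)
Your outline tracks the paper's argument closely: expand the square, force $\mathfrak{d}_m=\mathfrak{e}_m=(1)$, collapse to a single residue class, invoke the level-of-distribution hypothesis with a $\tau_{3k}$-type divisor weight and Cauchy--Schwarz for the $\mathcal{E}$-error, then diagonalize via the identity $\varphi([\mathfrak{d}_i,\mathfrak{e}_i])^{-1}=\varphi(\mathfrak{d}_i)^{-1}\varphi(\mathfrak{e}_i)^{-1}\sum_{\mathfrak{u}\mid(\mathfrak{d}_i,\mathfrak{e}_i)}g(\mathfrak{u})$ and control the off-diagonal $\mathfrak{s}_{i,j}\neq(1)$ terms with Mertens. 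That is the same route the paper takes, so on the whole the proposal is sound.

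However, there is one genuine omission specific to the number field case. You equate the inner count with $|P(N;\mathfrak{q},\alpha_0)|$, i.e.\ with the number of primes \emph{lying in} $A(N)$ in the class $\alpha_0\pmod{\mathfrak{q}}$. But the inner count is over $\alpha\in A(N)$ such that $\alpha+h_m$ is prime, so the prime is being detected in the \emph{translated} region $A(N)+h_m$, not in $A(N)$ itself. For $A=\mathbb{Z}$ or $A=\mathbb{F}_q[t]$ this discrepancy is harmless (the sets differ by $O(1)$ elements), but for $A=\mathcal{O}_K$ the shift moves lattice points across the boundary of the Minkowski region, and one must insert the boundary error $O\bigl((|A(N)|/|\mathfrak{q}|)^{1-\nu}\bigr)$ from \S\ref{subsec:dictionary}. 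Summing this over all tuples contributes an additional term $O\bigl(\lambda_{\max}^2|A(N)|(\log R)^{3k}(|A(N)|/R^2)^{-\nu}\bigr)$, which is negligible but must be accounted for; the paper does this explicitly. Relatedly, your argument that $\mathfrak{d}_m=\mathfrak{e}_m=(1)$ because $R^2<|\alpha+h_m|$ is airtight in $\mathbb{Z}$ and $\mathbb{F}_q[t]$, but in $\mathcal{O}_K$ the norm of $\alpha+h_m$ for $\alpha\in A(N)$ is only bounded above, not below, so a handful of small-norm primes dividing $\mathfrak{q}$ could in principle occur. The paper sidesteps this by observing instead that $\alpha_0+h_m$ is coprime to $\mathfrak{q}$ precisely when $\mathfrak{d}_m=\mathfrak{e}_m=(1)$ (with coprimality to $\mathfrak{w}$ built into the choice of $v_0$, and coprimality to the other $[\mathfrak{d}_i,\mathfrak{e}_i]$ because their prime factors exceed $D_0$ and hence cannot divide the fixed differences $h_m-h_i$); the non-coprime tuples then contribute only a trivially bounded, negligible amount. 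Both points are easily patched, but they are needed to make the number field case rigorous.
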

\begin{proof}

We begin by expanding out the square and swapping the order of summation, obtaining
$$
S_2^{(m)} = \sum_{\substack{\mathfrak{d}_1,\dots,\mathfrak{d}_k \\ \mathfrak{e}_1,\dots, \mathfrak{e}_k}} \lambda_{\mathfrak{d}_1,\dots,\mathfrak{d}_k} \lambda_{\mathfrak{e}_1,\dots,\mathfrak{e}_k} \sum_{\substack{\alpha \in A(N) \\ \alpha \equiv v_0 \pmod{\mathfrak{w}} \\ [\mathfrak{d}_i, \mathfrak{e}_i] \mid (\alpha + h_i)\,\forall i}} \chi_P(\alpha + h_m).
$$

As in Lemma \ref{lem:maynard_first_sum}, we rewrite the inner sum over a single residue class modulo $\mathfrak{q} = \mathfrak{w} \prod_{i=1}^k [\mathfrak{d}_i, \mathfrak{e}_i]$, which we may do if the ideals $\mathfrak{w}, [\mathfrak{d}_1, \mathfrak{e}_1],\dots, [\mathfrak{d}_k, \mathfrak{e}_k]$ are pairwise coprime.  The element $\alpha + h_m$ will lie in a residue class coprime to the modulus if and only if $\mathfrak{d}_m = \mathfrak{e}_m = 1$, the trivial ideal.  Based on our choice of $v_0\pmod{\mathfrak{w}}$, this is the only case that yields a contribution.  We find that
$$
\sum_{\substack{\alpha \in A(N) \\ \alpha \equiv v_0 \pmod{\mathfrak{w}} \\ [\mathfrak{d}_i, \mathfrak{e}_i] \mid (\alpha + h_i)\,\forall i}} \chi_P(\alpha + h_m) = \frac{|P(N)|}{\varphi(\mathfrak{q})} + O\left(\left(\frac{|A(N)|}{|\mathfrak{w}|\prod_{i=1}^{k}|[\mathfrak{d}_i, \mathfrak{e}_i]|}\right)^{1-\nu}\right)  +O\left(\mathcal{E}(N;\mathfrak{q},\alpha_0)\right),
$$
where we recall that $P(N)=P \cap A(N)$. (The first $O$-term is needed in the number field case, since it is $\alpha$ that is restricted to $A(N)$ instead of $\alpha+h_m$.) Letting $\mathcal{E}(N;\mathfrak{q}):=\max_{(\alpha_0,\mathfrak{q})=1} |\mathcal{E}(N;\mathfrak{q},\alpha_0)|$, we thus find that
\begin{multline}
S_2^{(m)} = \frac{|P(N)|}{\varphi(\mathfrak{w})} \sum_{\substack{\mathfrak{d}_1,\dots,\mathfrak{d}_k \\ \mathfrak{e}_1,\dots,\mathfrak{e}_k\\ \mathfrak{e}_m = \mathfrak{d}_m = 1}} \frac{\lambda_{\mathfrak{d}_1,\dots,\mathfrak{d}_k} \lambda_{\mathfrak{e}_1,\dots,\mathfrak{e}_k}}{\prod_{i=1}^k \varphi([\mathfrak{d}_i, \mathfrak{e}_i])} +
 O\left(\sum_{\substack{\mathfrak{d}_1,\dots,\mathfrak{d}_k \\ \mathfrak{e}_1,\dots,\mathfrak{e}_k}} |\lambda_{\mathfrak{d}_1,\dots,\mathfrak{d}_k} \lambda_{\mathfrak{e}_1,\dots,\mathfrak{e}_k}| \mathcal{E}(N;\mathfrak{q})\right) \\
 + O\left(\lambda_{\max}^2 \cdot |A(N)|^{1-\nu}\sum_{\substack{\mathfrak{d}_1,\dots,\mathfrak{d}_k \\ \mathfrak{e}_1,\dots,\mathfrak{e}_k}}  \frac{1}{\prod_{i=1}^{k} |[\mathfrak{d}_i, \mathfrak{e}_i]|^{1-\nu}}\right).\label{eq:S2mainerror}
\end{multline}
The second error term is $O(\lambda_{\max}^2 |A(N)| (\log{R})^{3k} \cdot (|A(N)|/R^2)^{-\nu})$, by an argument already appearing in the proof of Lemma \ref{lem:maynard_first_sum}. This is negligible for us. Now consider the first $O$-term.  For any $\mathfrak{q}$, there are at most $\tau_{3k}(\mathfrak{q})$ ways to choose $k$-tuples $\mathfrak{d}_1,\dots,\mathfrak{d}_k, \mathfrak{e}_1,\dots,\mathfrak{e}_k$ such that $\mathfrak{w} \prod_{i=1}^k [\mathfrak{d}_i, \mathfrak{e}_i] = \mathfrak{q}$. Recall that $\lambda_{\max} \ll y_{\max} \log^k R$.  Moreover, by our assumptions on the support of $\lambda_{\mathfrak{d}_1,\dots,\mathfrak{d}_k}$, the modulus $\mathfrak{q}$ satisfies $|\mathfrak{q}| \leq |\mathfrak{w}|R^2$.  Thus, the error term in \eqref{eq:S2mainerror} contributes no more than
\begin{align}
\label{eq:S2errorbd1}
y_{\max}^2 (\log R)^{2k} \sum_{|\mathfrak{q}| < R^2|\mathfrak{w}|} \mu(\mathfrak{q})^2 \tau_{3k}(\mathfrak{q})\mathcal{E}(N;\mathfrak{q}).
\end{align}
We now recall that we have assumed that the primes $P$ have level of distribution $\theta$, and we have taken $R=|A(N)|^{\theta/2-\epsilon}$.  Using the trivial bound $\mathcal{E}(N;\mathfrak{q}) \ll |A(N)|/\varphi(\mathfrak{q})$ along with the Cauchy-Schwarz inequality, we therefore find that
\begin{eqnarray*}
\sum_{|\mathfrak{q}| < R^2|\mathfrak{w}|} \mu(\mathfrak{q})^2 \tau_{3k}(\mathfrak{q})\mathcal{E}(N;\mathfrak{q})
\!\!\!	&\ll & \!\!\! \left(\sum_{|\mathfrak{q}|<R^2|\mathfrak{w}|} \mu(\mathfrak{q})^2 \tau_{3k}^2(\mathfrak{q}) \frac{|A(N)|}{\varphi(\mathfrak{q})}\right)^{1/2}\left(\sum_{|\mathfrak{q}| < R^2|\mathfrak{w}|} \mu(\mathfrak{q})^2 \mathcal{E}(N; \mathfrak{q})\right)^{1/2} \\
\!\!\!	&\ll & \!\!\! \frac{|A(N)|}{(\log N)^B}
\end{eqnarray*}
for any large $B$.

Now that we have handled the error term, we are free to concentrate on the main term. As in the proof of Lemma \ref{lem:maynard_first_sum}, we decouple $\mathfrak{d}_i$ and $\mathfrak{e}_j$ by introducing an auxilliary summation over ideals $\mathfrak{s}_{i,j}$, and we define the function multiplicative function $g(\mathfrak{a})$ by $g(\mathfrak{p})=|\mathfrak{p}|-2$, so that
$$
\frac{1}{\varphi([\mathfrak{d}_i, \mathfrak{e}_i])} = \frac{1}{\varphi(\mathfrak{d}_i) \varphi(\mathfrak{e}_i)} \sum_{\mathfrak{u}_i \mid \mathfrak{d}_i \mathfrak{e}_i} g(\mathfrak{u}_i).
$$
Our main term can thus be written as
\begin{align}
\label{eq:S2mainterm1}
\frac{|P(N)|}{\varphi(\mathfrak{w})} \sum_{\substack{\mathfrak{u}_1,\dots,\mathfrak{u}_k \\ \mathfrak{u}_m=1}} \left(\prod_{i=1}^k g(\mathfrak{u}_i)\right) \sideset{}{^*}\sum_{\mathfrak{s}_{1,2},\dots,\mathfrak{s}_{k, k-1}} \left(\prod_{1 \leq i, j \leq k} \mu(\mathfrak{s}_{i, j})\right) \sum_{\substack{\mathfrak{d}_1,\dots, \mathfrak{d}_k \\ \mathfrak{e}_1,\dots, \mathfrak{e}_k \\ \mathfrak{u}_i \mid \mathfrak{d}_i \mathfrak{e}_i\,\forall i \\ \mathfrak{s}_{i, j} \mid \mathfrak{d}_i \mathfrak{e}_j\,\forall i \neq j \\ \mathfrak{d}_m = \mathfrak{e}_m = 1}} \frac{\lambda_{\mathfrak{d}_1,\dots,\mathfrak{d}_k} \lambda_{\mathfrak{e}_1,\dots,\mathfrak{e}_k}}{\prod_{i=1}^k \varphi(\mathfrak{d}_i) \varphi(\mathfrak{e}_i)}.
\end{align}
We now make the change of variables indicated in the statement of the lemma.  This yields
$$
\frac{|P(N)|}{\varphi(\mathfrak{w})} \sum_{\substack{\mathfrak{u}_1,\dots,\mathfrak{u}_k \\ \mathfrak{u}_m=1}} \left(\prod_{i=1}^k \frac{\mu(\mathfrak{u}_i)^2}{g(\mathfrak{u}_i)}\right) \sum_{\mathfrak{s}_{1,2},\dots,\mathfrak{s}_{k, k-1}} \left(\prod_{\substack{1 \leq i, j \leq k \\ i \neq j}} \frac{\mu(\mathfrak{s}_{i, j})}{g(\mathfrak{s}_{i, j})^2}\right) y_{\mathfrak{a}_1,\dots,\mathfrak{a}_k}^{(m)} y_{\mathfrak{b}_1,\dots,\mathfrak{b}_k}^{(m)},
$$
where the $\mathfrak{a}_i$'s and $\mathfrak{b}_j$'s are defined as in the proof of Lemma \ref{lem:maynard_first_sum}. When some $\mathfrak{s}_{i, j} \neq 1$, the contribution is
\begin{align*}
& \ll \frac{(y_{\mathrm{max}}^{(m)})^2 |A(N)|}{\varphi(\mathfrak{w}) \log N}\left(\sum_{\substack{ |\mathfrak{u}| < R \\ (\mathfrak{u}, \mathfrak{w}) = 1}} \frac{\mu(\mathfrak{u})^2}{g(\mathfrak{u})}\right)^{k-1} \left(\sum_{\mathfrak{s}} \frac{\mu(\mathfrak{s})^2}{g(\mathfrak{s})^2}\right)^{k(k-1)-1} \sum_{|\mathfrak{s}_{i, j}| > D_0} \frac{\mu(\mathfrak{s}_{i, j})^2}{g(\mathfrak{s}_{i, j})^2} \\
& \ll \frac{(y_{\max}^{(m)})^2 \varphi(\mathfrak{w})^{k-2} |A(N)| (\log R)^{k-1}}{|\mathfrak{w}|^{k-1}D_0 \log N}.
\end{align*}
Putting all of this together, we find that
$$
S_2^{(m)} = \frac{|P(N)|}{\varphi(\mathfrak{w})} \sum_{\mathfrak{u}_1,\dots,\mathfrak{u}_k} \frac{(y_{\mathfrak{u}_1,\dots,\mathfrak{u}_k}^{(m)})^2}{\prod_{i=1}^k g(\mathfrak{u}_i)} + O\left(\frac{(y_{\max}^{(m)})^2 \varphi(\mathfrak{w})^{k-2} |A(N)| (\log R)^{k-2}}{D_0 |\mathfrak{w}|^{k-1}} + \frac{y_{\max}^2 |A(N)|}{(\log N)^B}\right),
$$
as claimed.
\end{proof}

We note that the quantities $y^{(m)}_{\mathfrak{r}_1,\dots,\mathfrak{r}_k}$ can be related to the variables $y_{\mathfrak{r}_1,\dots,\mathfrak{r}_k}$.

\begin{lemma}
\label{lem:y^{(m)}}
If $\mathfrak{r}_m=1$, then
\[
y^{(m)}_{\mathfrak{r}_1,\dots,\mathfrak{r}_k} = \sum_{\mathfrak{a}_m} \frac{y_{\mathfrak{r}_1,\dots,\mathfrak{r}_{m-1},\mathfrak{a}_{m},\mathfrak{r}_{m+1},\dots,\mathfrak{r}_k}}{\varphi(\mathfrak{a}_m)} + O\left(\frac{y_{\mathrm{max}}\varphi(\mathfrak{w}) \log R}{|\mathfrak{w}|D_0}\right).
\]
\end{lemma}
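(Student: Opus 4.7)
The plan is to apply Möbius inversion to the defining relation for $y_{\mathfrak{r}_1,\dots,\mathfrak{r}_k}$ (expressing $\lambda_{\mathfrak{d}_1,\dots,\mathfrak{d}_k}$ in terms of the $y$'s), substitute into the definition of $y^{(m)}_{\mathfrak{r}_1,\dots,\mathfrak{r}_k}$, evaluate the resulting inner $\mathfrak{d}$-sum in closed form, and finally recognize the expected sum as the diagonal of the resulting double sum.

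Explicitly, a direct Möbius inversion yields
\[
\lambda_{\mathfrak{d}_1,\dots,\mathfrak{d}_k} = \left(\prod_{i=1}^{k}\mu(\mathfrak{d}_i)|\mathfrak{d}_i|\right)\sum_{\substack{\mathfrak{e}_1,\dots,\mathfrak{e}_k\\ \mathfrak{d}_i\mid\mathfrak{e}_i\,\forall i}}\frac{y_{\mathfrak{e}_1,\dots,\mathfrak{e}_k}}{\prod_{i=1}^{k}\varphi(\mathfrak{e}_i)}.
\]
Substituting into the definition of $y^{(m)}_{\mathfrak{r}_1,\dots,\mathfrak{r}_k}$ and interchanging summation, the inner sum over $\mathfrak{d}_i$ (with $\mathfrak{r}_i\mid\mathfrak{d}_i\mid\mathfrak{e}_i$ for $i\neq m$ and $\mathfrak{d}_m=1$) decouples across $i$. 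For $i\neq m$, the elementary identity $\sum_{\mathfrak{f}\mid\mathfrak{n}}\mu(\mathfrak{f})|\mathfrak{f}|/\varphi(\mathfrak{f}) = \mu(\mathfrak{n})/\varphi(\mathfrak{n})$ evaluates the $i$-th factor in closed form (vanishing unless $\mathfrak{r}_i\mid\mathfrak{e}_i$). Writing $\mathfrak{e}_i = \mathfrak{r}_i\mathfrak{f}_i$ for $i\neq m$ (so $(\mathfrak{r}_i,\mathfrak{f}_i)=1$), relabeling $\mathfrak{e}_m$ as $\mathfrak{a}_m$, and collecting the constants yields
\[
y^{(m)}_{\mathfrak{r}_1,\dots,\mathfrak{r}_k} = \left(\prod_{i\neq m}\frac{g(\mathfrak{r}_i)|\mathfrak{r}_i|}{\varphi(\mathfrak{r}_i)^2}\right)\sum_{\mathfrak{a}_m}\frac{1}{\varphi(\mathfrak{a}_m)}\sum_{(\mathfrak{f}_i)_{i\neq m}}y_{\dots,\mathfrak{r}_i\mathfrak{f}_i,\dots,\mathfrak{a}_m,\dots}\prod_{i\neq m}\frac{\mu(\mathfrak{f}_i)}{\varphi(\mathfrak{f}_i)^2},
\]
where the inner sum runs over squarefree $\mathfrak{f}_i$ coprime to $\mathfrak{r}_i\mathfrak{w}$.

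The diagonal contribution, corresponding to $\mathfrak{f}_i = 1$ for every $i\neq m$, equals $\bigl(\prod_{i\neq m}g(\mathfrak{r}_i)|\mathfrak{r}_i|/\varphi(\mathfrak{r}_i)^2\bigr)\sum_{\mathfrak{a}_m}y_{\mathfrak{r}_1,\dots,\mathfrak{a}_m,\dots,\mathfrak{r}_k}/\varphi(\mathfrak{a}_m)$. Since the $y$'s are supported on tuples coprime to $\mathfrak{w}$, every prime $\mathfrak{p}\mid\mathfrak{r}_i$ satisfies $|\mathfrak{p}|>D_0$; a direct calculation gives $g(\mathfrak{p})|\mathfrak{p}|/\varphi(\mathfrak{p})^2 = 1-(|\mathfrak{p}|-1)^{-2}$, so the prefactor equals $1+O(1/D_0)$. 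Combining this with the trivial bound $|\sum_{\mathfrak{a}_m}y/\varphi(\mathfrak{a}_m)|\ll y_{\mathrm{max}}\varphi(\mathfrak{w})\log R/|\mathfrak{w}|$, itself a Mertens-type estimate for $\sum_{(\mathfrak{a},\mathfrak{w})=1,\,|\mathfrak{a}|\leq R}1/\varphi(\mathfrak{a})$, produces the claimed error term. For the off-diagonal contribution, any nontrivial $\mathfrak{f}_i$ introduces a factor $1/\varphi(\mathfrak{f}_i)^2$ with $|\mathfrak{f}_i|>D_0$, and since $\sum_{|\mathfrak{f}|>D_0}\mu^2(\mathfrak{f})/\varphi(\mathfrak{f})^2 = O(1/D_0)$, this too is absorbed into the same bound.

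The main obstacle will be careful bookkeeping in the change of variables---in particular, tracking how the various multiplicative-function factors combine under $\mathfrak{e}_i = \mathfrak{r}_i\mathfrak{f}_i$---and uniformly verifying the Mertens-type estimates across the three arithmetic settings. The latter is a standard consequence of the prime number theorems discussed in \S\ref{subsec:dictionary}, together with the function-field Mertens estimate of \cite{Rosen99}.
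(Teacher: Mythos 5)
Your proposal is correct and follows exactly the route the paper intends: the paper's proof of this lemma simply defers to Maynard's Lemma 5.3, and your argument is the \emph{mutatis mutandis} translation of that proof to ideals of $A$ --- Möbius inversion of the $\lambda$--$y$ relation, substitution into the definition of $y^{(m)}$, evaluation of the inner divisor sum via $\sum_{\mathfrak{f}\mid\mathfrak{n}}\mu(\mathfrak{f})|\mathfrak{f}|/\varphi(\mathfrak{f})=\mu(\mathfrak{n})/\varphi(\mathfrak{n})$, and isolation of the diagonal $\mathfrak{f}_i=1$ term with the prefactor $\prod_{i\neq m}(1-(|\mathfrak{p}|-1)^{-2})=1+O(1/D_0)$. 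All the multiplicative-function bookkeeping and the Mertens-type estimates you invoke check out against the estimates already used in Lemmas \ref{lem:maynard_first_sum} and \ref{lem:maynard_second_sum}.
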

\begin{proof}
The proof of this result relies upon combinatorial manipulations and standard estimates, and, using the ideas in Lemmas \ref{lem:maynard_first_sum} and \ref{lem:maynard_second_sum} can be deduced almost \emph{mutatis mutandis} from Maynard's proof of Lemma 5.3 \cite{Maynard2013}.
\end{proof}

We are now ready to make a specific choice of our sieve weights.  In particular, by choosing $y_{r_1,\dots,r_k}$ to be determined by the values of a smooth function, we will be able to express $S_1$ and $S_2^{(m)}$ in particularly nice terms.  Thus, let $F\colon [0,1]^k \to \mathbb{R}$ be a piecewise differentiable function supported on $\left\{(x_1, \dots, x_k) \in [0,1]^k: \sum_{i=1}^k x_i \leq 1  \right\}$.  If $\mathfrak{r} = \prod_{i=1}^k \mathfrak{r}_i$ satisfies $\mu(\mathfrak{r})^2 = 1$ and $(\mathfrak{r},\mathfrak{w})=1$, set
\[
y_{\mathfrak{r}_1, \dots, \mathfrak{r}_k} := F\left(\frac{\log |\mathfrak{r}_1|}{\log R}, \dots, \frac{\log |\mathfrak{r}_k|}{\log R}  \right)
\]
and set $y_{\mathfrak{r}_1, \dots, \mathfrak{r}_k} = 0$ otherwise.  In order to evaluate the summations of $y_{\mathfrak{r}_1,\dots,\mathfrak{r}_k}$, we will need the following lemma, which is an analogue of a result of Goldston, Graham, Pintz, and Y{\i}ld{\i}r{\i}m \cite[Lemma 4]{GGPY09}. (This result also appears as Lemma 6.1 in \cite{Maynard2013}.)

\begin{lemma} \label{lem:summation}
Suppose $\gamma$ is a multiplicative function on the nonzero ideals of $A$ such that there are constants $\kappa >0, A_1 > 0$, $A_2 \ge 1$, and $L \ge 1$ satisfying
\[
0 \leq \frac{\gamma(\mathfrak{p})}{\mathfrak{p}} \leq 1 - A_1,
\]
and
\[
-L \leq
\sum_{w \leq |\mathfrak{p}|< z}\frac{\gamma(\mathfrak{p}) \log |\mathfrak{p}|}{|\mathfrak{p}|} - \kappa \log z / w
\leq A_2,
\]
for any $2 \leq w \leq z$. Let $g$ be the totally multiplicative function defined on prime ideals by $g(\mathfrak{p}) = \gamma(\mathfrak{p}) / (|\mathfrak{p}| - \gamma(\mathfrak{p}))$. Let $G\colon [0,1] \to \mathbb{R}$ be a piecewise differentiable function, and let $G_{max} = \sup_{t \in [0,1]}(|G(t)| + |G'(t)|)$. Then
\[
\sum_{|\mathfrak{d}| < z} \mu(\mathfrak{d})^2 g(\mathfrak{d}) G \left(\frac{\log |\mathfrak{d}|}{\log z} \right)
=
\mathfrak{S}\frac{c_A^\kappa \cdot (\log z)^\kappa}{\Gamma(\kappa)} \int_0^1 G(x)x^{\kappa - 1} dx
+ O_{A,A_1, A_2, \kappa} \left(L G_{max} (\log z)^{\kappa - 1}
\right),
\]
where $c_A := \mathop{\mathrm{Res}}_{s=1} \zeta_A(s)$ and
\[
\mathfrak{S} =
\prod_{\mathfrak{p}} \left(1 - \frac{\gamma(\mathfrak{p})}{|\mathfrak{p}|} \right)^{-1}
\left(1 - \frac{1}{|\mathfrak{p}|} \right)^\kappa.
\]
\end{lemma}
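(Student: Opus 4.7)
\medskip
\noindent\emph{Proof proposal.}
The approach is to mirror the strategy used in the classical case $A = \mathbb{Z}$ (as in GGPY \cite{GGPY09} and Maynard's Lemma 6.1 \cite{Maynard2013}), replacing the Riemann zeta function everywhere by $\zeta_A(s)$. The key input beyond elementary manipulations is that $\zeta_A(s)$ has a simple pole at $s=1$ with residue $c_A$, which is true in all three settings of the dictionary.

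The starting point is the Dirichlet series
\[
F(s) := \sum_{\mathfrak{d}} \mu(\mathfrak{d})^2 g(\mathfrak{d})\,|\mathfrak{d}|^{-s},
\]
whose Euler product is $\prod_{\mathfrak{p}} (1 + g(\mathfrak{p})|\mathfrak{p}|^{-s})$. The first step is to factor
\[
F(s) = \zeta_A(1+s)^{\kappa}\, H(s),
\]
where $H(s)$ is analytic and bounded in a right half-plane of $0$ with $H(0) = \mathfrak{S}$. This is done by comparing the local Euler factor $1 + g(\mathfrak{p})|\mathfrak{p}|^{-s}$ with $(1-|\mathfrak{p}|^{-1-s})^{-\kappa}$: the hypothesis $0 \leq \gamma(\mathfrak{p})/|\mathfrak{p}| \leq 1-A_1$ keeps the local factors bounded away from $0$, and the Mertens-type estimate for $\sum_{|\mathfrak{p}|<z} \gamma(\mathfrak{p})\log|\mathfrak{p}|/|\mathfrak{p}|$ guarantees the logarithm of the ratio converges, with the $O(L)$ deviation controlling the growth of $H$.

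Next, I would extract the asymptotic behaviour of the partial sums
\[
M(z) := \sum_{|\mathfrak{d}|<z} \mu(\mathfrak{d})^2 g(\mathfrak{d})
\]
from the behaviour of $F(s)$ near $s = 0$. Since $\zeta_A(1+s) \sim c_A/s$, we have $F(s) \sim \mathfrak{S}\,c_A^\kappa s^{-\kappa}$ as $s \to 0^+$. A Selberg--Delange / Perron-type argument (via a truncated contour integral with a Hankel-style path around $s=0$, exactly as in GGPY) then yields
\[
M(z) = \frac{\mathfrak{S}\,c_A^{\kappa}}{\Gamma(\kappa+1)} (\log z)^{\kappa} + O_{A,A_1,A_2,\kappa}\!\left( L (\log z)^{\kappa-1}\right).
\]
The final step is to insert the smooth weight $G$ by Abel summation (equivalently, Stieltjes integration against $dM$), changing variables $x = \log t / \log z$:
\[
\sum_{|\mathfrak{d}|<z} \mu(\mathfrak{d})^2 g(\mathfrak{d})\,G\!\left(\tfrac{\log|\mathfrak{d}|}{\log z}\right)
= \int_{1^{-}}^{z} G\!\left(\tfrac{\log t}{\log z}\right) dM(t).
\]
Substituting the asymptotic for $M$ and using the identity $\kappa/\Gamma(\kappa+1) = 1/\Gamma(\kappa)$ gives the main term $\mathfrak{S}\,c_A^{\kappa}(\log z)^{\kappa}\Gamma(\kappa)^{-1}\int_0^1 G(x)x^{\kappa-1}\,dx$, while $G_{\max}$ controls the boundary and derivative contributions in the integration by parts to yield the stated error.

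The main obstacle is the analytic step: one needs the contour-shift / Tauberian argument to go through uniformly in all of $A_1, A_2, L, \kappa$. In the number field case this is straightforward because $\zeta_K(s)$ has the full complement of analytic properties needed. In the function field case, $\zeta_A(1+s) = (1-q^{-s})^{-1}$ is periodic in $s \mapsto s + 2\pi i/\log q$, but the Hankel contour used to recover $M(z)$ only sees the pole at $s=0$ once (it can be taken of radius less than $2\pi/\log q$), so the same argument applies verbatim. Beyond this analytic kernel, every manipulation is the one carried out in \cite{GGPY09,Maynard2013} with $n$ replaced by $|\mathfrak{d}|$ and $\mathbb{Z}$ replaced by $A$.
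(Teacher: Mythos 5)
Your proposal takes a genuinely different route from the paper, and that route has a gap. The paper reduces (via partial summation) to estimating $M(z) := \sum_{|\mathfrak{d}|<z}\mu(\mathfrak{d})^2 g(\mathfrak{d})$ and then follows the entirely real-variable, elementary argument of Halberstam--Richert (Lemma 5.4 of \cite{HalberstamRichert2011}): one first establishes $M(z) = c(\log z)^\kappa + O(L(\log 2z)^{\kappa-1})$ by iterating an identity of Wirsing type, and only afterwards identifies the constant $c$ via the limit $c = \Gamma(\kappa+1)^{-1}\lim_{s\to 0^+} s^\kappa\prod_\mathfrak{p}(1+g(\mathfrak{p})|\mathfrak{p}|^{-s})$. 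This is also what \cite{GGPY09} does, so your attribution of a Hankel-contour argument to GGPY is inaccurate.

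The gap in your approach is the factorization $F(s) = \zeta_A(1+s)^\kappa H(s)$ with $H$ \emph{analytic and bounded in a complex neighbourhood} of $s=0$. The hypotheses of the lemma are purely real-variable: $0 \le \gamma(\mathfrak{p})/|\mathfrak{p}| \le 1-A_1$ together with the two-sided Mertens estimate on $\sum_{w\le|\mathfrak{p}|<z}\gamma(\mathfrak{p})\log|\mathfrak{p}|/|\mathfrak{p}|$. These control $\gamma(\mathfrak{p})$ only on average and only through a logarithmically weighted partial sum on the real axis; they do \emph{not} imply that $\sum_\mathfrak{p}\bigl|g(\mathfrak{p}) - \kappa/|\mathfrak{p}|\bigr|$ converges, which is what you would need for $\prod_\mathfrak{p}(1+g(\mathfrak{p})|\mathfrak{p}|^{-s})(1-|\mathfrak{p}|^{-1-s})^\kappa$ to converge absolutely (hence analytically) in a half-plane including $s=0$. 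Indeed, $\gamma(\mathfrak{p})$ may fluctuate wildly subject only to the averaged constraints, in which case $H(s)$ converges conditionally as $s\to 0^+$ along the reals (which is all the paper needs for identifying $c$) but need not extend to a bounded analytic function off the real axis. Without such an extension, the Perron/Hankel contour shift cannot be justified, and there is also no clear way to track the dependence of the error on $L$ through a contour argument, whereas the Halberstam--Richert iteration produces the $O(L(\log 2z)^{\kappa-1})$ error directly. The Selberg--Delange method requires genuinely stronger analytic hypotheses than this lemma provides; the paper's use of the elementary Wirsing/Halberstam--Richert route is not a stylistic choice but a necessity given the stated assumptions. Your partial-summation step at the end, and the identification $H(0)=\mathfrak{S}$ via the residue relation $s \sim c_A/\zeta_A(s+1)$, are both fine and agree with the paper.
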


\begin{remark} In both \cite{GGPY09} and \cite{Maynard2013}, the analogous error term is asserted to be $O(\mathfrak{S} L G_{\max} (\log{z})^{\kappa-1})$. In other words, there is a factor of $\mathfrak{S}$ not present in our statement. However, the proofs appear to support this stronger estimate only if one makes a further assumption on the size of $z$ compared to $L$. Fortunately, this discrepancy is of no importance in the applications, as this error term is always subsumed by larger errors.
\end{remark}

\begin{proof} Let $G(z):= \sum_{|\mathfrak{d}| < z} \mu(\mathfrak{d})^2 g(\mathfrak{d})$. A straightforward argument using partial summation (along the lines of that given explicitly by Goldston et al. in their proof of \cite[Lemma 4]{GGPY09}) reduces the claim to showing that
\[ G(z) = \mathfrak{S} \cdot \frac{c_{A}^{\kappa} (\log{z})^{\kappa}}{\Gamma(\kappa+1)} + O_{A, A_1, A_2, \kappa}(L (\log{(2z)})^{\kappa-1}) \]
for all $z \ge 1$. This last assertion is an exact analogue of what is shown by Halberstam and Richert in their proof of Lemma 5.4 in \cite{HalberstamRichert2011}. In fact, following their argument \cite[pp. 147--151]{HalberstamRichert2011} essentially verbatim, we find that
\[ G(z)= c (\log{z})^{\kappa} + O(L (\log{(2z)})^{\kappa-1}) \]
for some constant $c$ and all $z \ge 1$. (Compare with equations (3.10) and (3.11) on pages 150 and 151 of \cite{HalberstamRichert2011}.) It remains only to show that $c= c_A^{\kappa} \cdot \mathfrak{S}/{\Gamma(k+1)}$. The argument at the bottom of p. 151 of \cite{HalberstamRichert2011} shows that
\[ c = \frac{1}{\Gamma(\kappa+1)} \lim_{s\to 0^+} s^{\kappa} \prod_{p}\left(1 + \frac{g(\mathfrak{p})}{|\mathfrak{p}|^s}\right). \]
To compute the limit, note that $\zeta_A(s+1) = \prod_{\mathfrak{p}} (1-|\mathfrak{p}|^{-s-1})$ and that $s \sim c_A/\zeta_A(s+1)$ as $s\to 0^+$. This implies that
\[ \lim_{s\to 0^+} s^{\kappa} \prod_{\mathfrak{p}}\left(1 + \frac{g(\mathfrak{p})}{|\mathfrak{p}|^s}\right) = c_A^{\kappa} \cdot \lim_{s\to 0^+} \prod_{\mathfrak{p}}\left(1 + \frac{g(\mathfrak{p})}{|\mathfrak{p}|^s}\right) \left(1-\frac{1}{|\mathfrak{p}|^{s+1}}\right)^{\kappa}. \]
Our opening assumptions on $\gamma(\mathfrak{p})$ imply uniform convergence of the final product for real $s \geq 0$. (The proof of this follows the proof of the first part of Lemma 5.3 in \cite{HalberstamRichert2011}.) Thus, \[ \lim_{s\to 0^+} \prod_{\mathfrak{p}}\left(1 + \frac{g(\mathfrak{p})}{|\mathfrak{p}|^s}\right) \left(1-\frac{1}{|\mathfrak{p}|^{s+1}}\right)^{\kappa} = \prod_{p}\left(1 + {g(\mathfrak{p})}\right) \left(1-\frac{1}{|\mathfrak{p}|}\right)^{\kappa}= \mathfrak{S}. \] Hence, $c= c_A^{\kappa} \cdot \mathfrak{S}/{\Gamma(k+1)}$, which completes the proof of the lemma.
\end{proof}

\begin{proof}[Proof of Proposition \ref{prop:main}] With all of our earlier results in place, the proof of this result follows from exactly the same reasoning as Maynard's proofs of Lemmas 6.1 and 6.2 \cite{Maynard2013}. Here our Lemma \ref{lem:maynard_first_sum} replaces his Lemma 5.1, our Lemma \ref{lem:maynard_second_sum} replaces his Lemma 5.2, our Lemma \ref{lem:y^{(m)}} replaces his Lemma 5.3, and our Lemma \ref{lem:summation} replaces his Lemma 6.1. In fact, we find that the asymptotic estimates for $S_1$ and $S_2$ asserted in Proposition \ref{prop:main} hold with errors that are $O(F_{\max}^2 |A(N)| \frac{\phi(\mathfrak{w})^{k} (\log{R})^k}{|\mathfrak{w}|^{k+1} D_0})$.
\end{proof}

\subsection{Final assembly of theorems}
\label{subsec:winning}

Proposition \ref{prop:main} allows us to obtain the following analogue of \cite[Proposition 4.2]{Maynard2013}.
\begin{corollary}\label{cor:maincor} Suppose that the set of primes $P$ in $A$ has level of distribution $\theta > 0$. Let $\mathcal{H} = (h_1, \dots, h_k)$ be an admissible $k$-tuple. Let $I_k(F)$ and $J_k^{(m)}(F)$ be defined as in the statement of Proposition \ref{prop:main}. Let $\mathcal{S}_k$ denote the set of piecewise differentiable functions $F\colon [0,1]\to \mathbb{R}$ supported on $\mathcal{R}_k:= \{(x_1, \dots, x_k)\in [0,1]^k: \sum_{i=1}^{k} x_i \leq 1\}$ with $I_k(F) \neq 0$ and $J_k^{(m)}(F) \neq 0$ for each $m$. Let
	\[ M_k := \sup_{F \in \mathcal{S}_k} \frac{\sum_{m=1}^{k} J_k^{(m)}(F)}{I_k(F)} \quad\text{and let}\quad r_k := \left\lceil \frac{\theta M_k}{2}\right\rceil.\]
There are infinitely many $\alpha \in A$ such that at least $r_k$ of the $\alpha+h_i$ ($1 \leq i \leq k$) are prime.
\end{corollary}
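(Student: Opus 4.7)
I would follow the standard positivity argument underlying the Maynard--Tao method. The admissibility of $\mathcal{H}$ together with the Chinese remainder theorem yields a residue class $v_0 \pmod{\mathfrak{w}}$ for which each $v_0 + h_i$ is a unit modulo $\mathfrak{w}$; this is the class appearing in the sums $S_1$ and $S_2$. For a small $\epsilon > 0$, pick $F \in \mathcal{S}_k$ with $\sum_{m=1}^k J_k^{(m)}(F)/I_k(F) > M_k - \epsilon$, and let $\lambda_{\mathfrak{d}_1, \dots, \mathfrak{d}_k}$ be the sieve weights produced from $F$ as in Proposition \ref{prop:main}, with $R = |A(N)|^{\theta/2 - \delta}$ for a small auxiliary $\delta > 0$.

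For any $\rho \in \mathbb{R}$, the identity
\[
S_2 - \rho S_1 = \sum_{\substack{\alpha \in A(N) \\ \alpha \equiv v_0 \pmod{\mathfrak{w}}}} \left( \sum_{i=1}^{k} \chi_P(\alpha + h_i) - \rho \right) \left( \sum_{\substack{\mathfrak{d}_1, \dots, \mathfrak{d}_k \\ \mathfrak{d}_i \mid \alpha + h_i\,\forall i}} \lambda_{\mathfrak{d}_1, \dots, \mathfrak{d}_k} \right)^2,
\]
combined with non-negativity of the squared factor, shows that $S_2 - \rho S_1 > 0$ forces at least one $\alpha$ in the outer sum to satisfy $\sum_i \chi_P(\alpha + h_i) > \rho$, and hence to have at least $\lfloor \rho \rfloor + 1$ prime values among the $\alpha + h_i$. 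Proposition \ref{prop:main} then evaluates
\[
\frac{S_2}{S_1} = (1 + o(1)) \cdot \frac{|P(N)|\, c_A \log R}{|A(N)|} \cdot \frac{\sum_m J_k^{(m)}(F)}{I_k(F)},
\]
and combining the prime number theorem recorded in Section \ref{subsec:dictionary} with our choice of $R$ shows that the arithmetic factor $|P(N)|\,c_A\log R/|A(N)|$ tends to $\theta/2 - \delta$ (the PNT normalizing constant and $c_A$ pair naturally to cancel in each of the three settings). Consequently $S_2 - \rho S_1 > 0$ for all sufficiently large $N$ whenever $\rho < (\theta/2 - \delta)(M_k - \epsilon)$.

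Letting $\delta,\epsilon \to 0$ and pushing $\rho$ up to (just below) $\theta M_k/2$, the integer $\lfloor\rho\rfloor + 1$ equals $\lceil \theta M_k/2\rceil = r_k$ in all cases; the only mild subtlety occurs when $\theta M_k/2 \in \mathbb{Z}$, and it is handled by taking $\epsilon,\delta$ small enough that $\rho > \theta M_k/2 - 1$. Running the argument for a sequence of $N$ tending to infinity produces $\alpha$'s of arbitrarily large size, yielding the infinitely many claim since the boxes $A(N)$ are disjoint (or at worst meet in bounded ranges) and grow without bound.

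The main technical obstacle is verifying the compatibility between the PNT constant and $c_A$ uniformly across $A = \mathbb{Z}$, $\mathcal{O}_K$, and $\mathbb{F}_q[t]$, so that $|P(N)|\,c_A\log R/|A(N)| \to \theta/2$. In the function field setting this is transparent since $c_A = 1/\log q$ balances $\log N = n\log q$; in the integer case both constants equal $1$; but in the number field case one must carefully track the relationship between $|A(N)|$, $\log|A(N)|$, and the Dedekind residue in order to see the same cancellation. Once this bookkeeping is in hand, the remainder of the argument is the routine positivity deduction above.
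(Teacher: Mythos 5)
Your overall strategy --- the positivity decomposition of $S_2 - \rho S_1$, the choice of $F$ close to the supremum $M_k$, the appeal to Proposition \ref{prop:main}, and the choice $R = |A(N)|^{\theta/2-\delta}$ --- is exactly the one the paper follows, and the ceiling-function bookkeeping at the end is handled correctly, including the edge case where $\theta M_k/2$ is an integer.

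However, the one piece you explicitly identify but defer is precisely where the substance of the paper's proof lies: establishing that the arithmetic factor $c_A\,|P(N)|\log|A(N)|/|A(N)|$ tends to $1$ (the paper writes this limit as $\Delta$ and devotes most of the proof to showing $\Delta=1$). You are correct that this is transparent for $\mathbb{Z}$ and for $\mathbb{F}_q[t]$, but in the number field setting it is not routine bookkeeping. One must (a) compute $|A(N)| \sim (2\pi)^{r_2}(2N)^d(1-2^{-d})/\sqrt{|D_K|}$ by comparing the volume of the box in Minkowski space $\mathbb{R}^{r_1}\times\mathbb{C}^{r_2}$ cut out by the defining inequalities to the covolume $2^{-r_2}\sqrt{|D_K|}$ of the lattice $\mathcal{O}_K$; (b) invoke Mitsui's generalized prime number theorem to count principal primes in that box, together with an asymptotic evaluation of the resulting multiple integral (the paper cites Hinz's Lemma 6 for this step); and (c) feed in Dedekind's analytic class number formula $c_A = 2^{r_1}(2\pi)^{r_2}h_K\mathrm{Reg}_K/(w_K\sqrt{|D_K|})$. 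Only upon combining all three does one see the $(2\pi)^{r_2}$, $\sqrt{|D_K|}$, $h_K$, $\mathrm{Reg}_K$, and $w_K$ factors cancel to give $\Delta = 1$. Without this computation carried out, the argument only yields $r_k = \lceil \Delta\theta M_k/2\rceil$ with $\Delta$ an unevaluated constant, so while the structure of your proof is right, the most arithmetically nontrivial step is left open.
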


\begin{proof} We mimic the proof of \cite[Proposition 4.2]{Maynard2013}. Recall from \S\ref{subsec:sieve} that if $S:= S_2 - \rho S_1 > 0$ for a certain $N$, then there are more than $\rho$ primes among the $\alpha+h_i$ ($1 \leq i \leq k$), for some $\alpha \in A(N)$. Consequently, if $S > 0$ for all large $N$, then there are infinitely many translates of $(h_1, \dots, h_k)$ containing more than $\rho$ primes. Put $R = |A(N)|^{\theta/2- \epsilon}$ for a small $\epsilon > 0$. Choose $F_0 \in \mathcal{S}_K$ so that $\sum_{m=1}^{k} J_k^{(m)}(F_0) > (M_k-\epsilon) I_k(F_0)$. Using Proposition \ref{prop:main}, we see we can choose the weights $\lambda_{\mathfrak{d}_1, \dots, \mathfrak{d}_k}$ so that
	\begin{align*} S &= \frac{\phi(\mathfrak{w})^k}{|\mathfrak{w}|^{k+1}} |A(N)| (c_A \log{R})^k \left( \frac{(c_A \log{R}) |P(N)|}{|A(N)|} \sum_{m=1}^{k} J_k^{(m)}(F) - \rho I_k(F_0) + o(1)\right) \\
	&\ge  \frac{\phi(\mathfrak{w})^k}{|\mathfrak{w}|^{k+1}} |A(N)| (c_A \log{R})^k I_k(F_0) \left( \Delta \cdot  \left(\frac{\theta}{2}-\epsilon\right)\left(M_k-\epsilon\right) - \rho + o(1)\right), \end{align*}
where
\[ \Delta:= c_A \cdot \lim_{N\to\infty} \frac{|P(N)| \log |A(N)|}{|A(N)|}. \]
(The existence of this limit will be shown momentarily.) If $\rho = \Delta \cdot \Theta \cdot M_k/2 - \delta$, then choosing $\epsilon$ sufficiently small, we get that $S > 0$ for large $N$. Since $\delta > 0$ was arbitrary, there must be infinitely many $\alpha \in A$ such that at least $\lceil \Delta \cdot \Theta \cdot M_k/2\rceil$ of the $\alpha+h_i$ ($1 \leq i \leq k$) are prime.

We now show that $\Delta=1$, which will complete the proof of the proposition. We consider separately the cases when $A=\mathbb{F}_q[t]$ and when $A =\mathcal{O}_K$.

If $A= \mathbb{F}_q[t]$, then $\zeta_A(s) = \frac{1}{1-q^{1-s}}$ and so $c_A = \frac{1}{\log{q}}$. On the other hand, for $N=q^n$, we have $|A(N)| = q^n$ and $|P(N)| = q^{n}/n + O(q^{n/2}/n)$. (For all of these facts, see, e.g., \cite[pp. 11--14]{Rosen2002}.) Thus, $|P(N)| \sim |A(N)| \log{q}/\log|A(N)|$ as $N=q^n \to\infty$, and so $\Delta=1$.

Suppose now that $A = \mathcal{O}_K$, where $K$ is a number field with $r_1$ real embeddings and $r_2$ pairs of complex conjugate embeddings. Consider the region in Minkowski space corresponding to the conditions defining $A_0(N)$:
\begin{multline*} \{(x_1, \dots, x_{r_1}, z_{r_1+1}, \dots, z_{r_1+r_2}) \in \mathbb{R}^{r_1} \times \mathbb{C}^{r_2}: 0 \leq x_i \leq N, |z_j| \leq N \\ \text{ for all $1\le i\le r_1$ \text{ and } $r_1+1 \le j\le r_1+r_2$}\}.  \end{multline*}
This has volume $N^d \cdot \pi^{r_2}$. On the other hand, the image of $\mathcal{O}_K$ under the Minkowski embedding is a lattice with covolume $2^{-r_2} \sqrt{|D_K|}$, where $D_K$ denotes the discriminant of $K$. It follows that $|A_0(N)| \sim {(2\pi)^{r_2} N^d}/{\sqrt{|D|}}$, as $N\to\infty$. Since $A(N) = A_0(2N) \setminus A_0(N)$,
\[ |A(N)| \sim \frac{(2\pi)^{r_2} (2N)^d (1-1/2^d)}{\sqrt{|D|}}. \]
We turn now to the estimation of $|P(N)|$. For this, we employ Mitsui's generalized prime number theorem \cite{Mitsui1956}, a special case which is that the number of primes in $A_0(N)$ is
\[ \sim \frac{w_K}{2^{r_1} h_K \mathrm{Reg}_K} \idotsint\limits_{[2,N]^{r_1}\times [2,N^2]^{r_2}} \frac{du_1 \dots du_{r_1+r_2}}{\log(u_1 \dots u_{r_1+r_2})}, \]
as $N \to \infty$. Here $w_K$ is the number of roots of unity contained in $K$, $h_K$ is the class number of $K$, and $\mathrm{Reg}_K$ is the regulator of $K$. The integral appearing here is asymptotic to $N^d/\log{(N^d)}$, by  \cite[Lemma 6]{Hinz1981}. Hence,
\[ |P(N)| \sim \frac{w_K}{2^{r_1} h_K \mathrm{Reg}_K} (1-1/2^d) \frac{(2N)^d}{\log((2N)^d)}.  \]
Finally, Dedekind's class number formula asserts that
\[ c_A = \frac{2^{r_1} (2\pi)^{r_2} h_K \mathrm{Reg}_K}{w_K \sqrt{|D_K|}}. \]
Referring back to the definition of $\Delta$, we find after some algebra that indeed $\Delta=1$.
\end{proof} 

As shown by Maynard \cite[Proposition 4.13]{Maynard2013}, we have $M_k > \log{k}-2\log\log{k}-2$ for all large enough values of $k$. In particular, $M_k\to\infty$ as $k\to\infty$. So Theorem \ref{thm:number_field} follows at once from Corollary \ref{cor:maincor} provided that the primes in $\mathcal{O}_K$ always possess a positive level of distribution. Similarly, Theorem \ref{thm:function_field} follows provided that the primes in $\mathbb{F}_q[t]$ possess a positive level of distribution not depending on $q$. Both provisos were already asserted to hold in \S\ref{subsec:dictionary}. In fact, we have the following:

\begin{theorem}[Hinz]\label{thm:hinz} Let $K/\mathbb{Q}$ be a number field with $r_2$ pairs of complex conjugate embeddings. If $r_2=0$ (i.e., $K$ is totally real), the set $P$ of primes of $\mathcal{O}_K$ has level of distribution $\theta$ for any $\theta < \frac{1}{2}$. In general, $P$ has level of distribution $\theta$ for any $\theta < \frac{1}{r_2 + \frac{5}{2}}$.
\end{theorem}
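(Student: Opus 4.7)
The plan is to follow Hinz \cite{Hinz1988}, which adapts the classical Bombieri--Vinogradov theorem to the number field setting.

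First, for each modulus $\mathfrak{q}$, I would use orthogonality of characters of the narrow ray class group modulo $\mathfrak{q}$ to rewrite $\mathcal{E}(N;\mathfrak{q},\alpha_0)$ as a short sum of twisted prime-counting functions of the form $\sum_{\alpha \in A(N)} \Lambda_K(\alpha) \chi(\alpha)$, where $\chi$ runs over primitive Hecke characters of conductor dividing $\mathfrak{q}$. When $K$ has complex places, these include genuine Gr\"ossencharacters carrying continuous infinite-type components. After summing over $|\mathfrak{q}| \leq Q$ and taking maxima over residue classes, the problem reduces to bounding in mean-square these twisted sums as $\chi$ ranges over primitive Hecke characters of conductor $\mathfrak{f}$ with $|\mathfrak{f}| \leq Q$.

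Second, I would apply the multiplicative large sieve inequality for Hecke characters of $K$, together with Heath-Brown-style zero-density estimates for the associated Hecke $L$-functions near $\Re(s)=1$. A possible exceptional real zero is dealt with in the usual way via a Siegel-type argument (this is where the implied constant becomes ineffective, but that is acceptable here). Combining these two inputs yields a bound of the desired shape $\ll_B |A(N)|/\log^B N$ whenever $Q \leq |A(N)|^\theta$ for a $\theta$ in the permissible range.

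The main obstacle, and the mechanism that produces the dependence on $r_2$, is that a ray-class character modulo $\mathfrak{q}$ over $K$ is not determined by finite data alone: its infinite-type component varies over a real parameter space of dimension $r_2$ (after quotienting by the logarithmic embedding of the units of $\mathcal{O}_K$). The large sieve and the zero-density machinery must be applied uniformly in this continuous parameter $t \in \mathbb{R}^{r_2}$, which effectively inflates the analytic conductor by a factor of $(1+|t|)^{r_2}$. Tracking the resulting loss through the exponent calculus forces $\theta < 1/(r_2+\tfrac{5}{2})$ in general, while for $r_2 = 0$ the continuous parameter disappears and one recovers the classical Bombieri--Vinogradov exponent $\theta < 1/2$.
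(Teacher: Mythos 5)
The paper does not actually prove this theorem: it is cited as a black box, the entire justification being the sentence ``Theorem \ref{thm:hinz} is contained in the somewhat more general main theorem of \cite{Hinz1988}.'' Your plan is therefore a proposed reconstruction of Hinz's argument, not something that can be checked against a proof appearing in this paper.

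As a reconstruction, the broad outline is sensible: reducing to Hecke characters, recognising that Gr\"ossencharacters with archimedean type parameters enter when $K$ has complex places, and invoking a large sieve for such characters are all genuine ingredients of a number-field Bombieri--Vinogradov theorem, and you are right that the archimedean data attached to the complex places are what make the exponent decay with $r_2$. Where the plan falls short is in producing, or even making plausible, the specific threshold $\theta < 1/(r_2+\tfrac{5}{2})$. A heuristic ``analytic conductor inflated by $(1+|t|)^{r_2}$'' does not yield the $\tfrac{5}{2}$, and your own narrative is internally inconsistent at $r_2=0$: if the loss were solely an $r_2$-dimensional continuous parameter space, then specialising the general formula to $r_2 = 0$ would give $\theta < 1/2$, but in fact it gives only $\theta < 2/5$; the sharper bound for totally real fields is a genuinely separate statement (with no complex places the archimedean dual is qualitatively different, and the detection of elements lying in the box $A(N)$ simplifies), and a reconstruction worth the name would have to explain why that extra savings appears rather than assert that the parameter ``disappears.'' You should also verify, before committing to Heath-Brown zero-density and Siegel-zero machinery, whether Hinz actually routes through those or instead through a Vaughan-type identity with the large sieve; both strategies exist for Bombieri--Vinogradov, and they distribute the work very differently. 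To recover the actual exponent you would need to open \cite{Hinz1988} and follow the bookkeeping through the large-sieve inequality for Gr\"ossencharacters and the Perron/Mellin detection of $A(N)$, controlling both the discrete weights and the continuous parameters at complex places; the plan gestures at the terrain but does not walk through it.
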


\begin{theorem}\label{thm:hayes} If $A= \mathbb{F}_q[t]$, then the set $P$ has level of distribution $\frac{1}{2}$. Indeed, for all $\mathfrak{q}$ and $N$, we have the (stronger) pointwise error estimate
	\[ \max_{\substack{\alpha_0\bmod{\mathfrak{q}}\\ (\alpha_0, \mathfrak{q})=1 }}|\mathcal{E}(N;\mathfrak{q},\alpha_0)| \ll (\log{2|\mathfrak{q}|})\cdot |A(N)|^{1/2}. \]
\end{theorem}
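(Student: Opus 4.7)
The plan is to reduce the estimate on $\mathcal{E}(N;\mathfrak{q},\alpha_0)$ to a bound on Dirichlet character sums over monic irreducibles of degree $n$ (where $|A(N)| = q^n$), and then to invoke Weil's Riemann Hypothesis for curves by way of the explicit description of Dirichlet $L$-functions over $\mathbb{F}_q[t]$. Write $\mathfrak{q} = (Q)$ with $Q$ a monic polynomial of degree $D = \log_q|\mathfrak{q}|$. Orthogonality of characters modulo $Q$ gives
\[
|P(N;\mathfrak{q},\alpha_0)| = \frac{1}{\varphi(\mathfrak{q})} \sum_{\chi \bmod Q} \bar\chi(\alpha_0)\, \pi(n,\chi), \qquad \pi(n,\chi) := \sum_{f \in P(N)} \chi(f),
\]
and the principal character contributes precisely $\frac{1}{\varphi(\mathfrak{q})} |P(N)|$ up to the at-most-$D$ irreducibles of degree $n$ dividing $Q$; this furnishes the main term.

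Next I would bound $\pi(n,\chi)$ for each non-principal $\chi \bmod Q$. The associated $L$-function $L(u,\chi) = \sum_{f \text{ monic}} \chi(f) u^{\deg f}$ is a polynomial in $u = q^{-s}$ of degree at most $D-1$. By Weil's theorem, each of its inverse roots has absolute value either $\sqrt{q}$ (the nontrivial zeros of the primitive $L$-function inducing $\chi$) or $1$ (trivial zeros coming from primes dividing $Q$ but not the conductor of $\chi$). Taking logarithmic derivatives yields
\[
\psi(n,\chi) := \sum_{\substack{f \text{ monic} \\ \deg f = n}} \Lambda(f) \chi(f) \ll D \cdot q^{n/2},
\]
and the standard passage from $\psi(n,\chi)$ to $\pi(n,\chi)$ (absorbing the $O(q^{n/2}/n)$ contribution from prime powers of exponent $\geq 2$) gives $|\pi(n,\chi)| \ll D q^{n/2}/n$.

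Summing the triangle inequality over the $\varphi(\mathfrak{q})-1$ non-principal characters kills the $\varphi(\mathfrak{q})$ in the denominator and produces
\[
|\mathcal{E}(N;\mathfrak{q},\alpha_0)| \ll D \cdot q^{n/2}/n \ll (\log 2|\mathfrak{q}|) \cdot |A(N)|^{1/2},
\]
which is in fact slightly stronger than the asserted pointwise bound; the level-of-distribution assertion $\theta = 1/2$ then follows by summing this pointwise estimate over $|\mathfrak{q}| \leq |A(N)|^\theta$. There is no real obstacle, since the polynomial-in-$u$ description of $L(u,\chi)$ together with the Weil bound on its zeros is a classical consequence of the Riemann Hypothesis for curves and is recorded, for example, in Rosen's \emph{Number Theory in Function Fields}. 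The only care needed is to track the trivial zeros contributed by imprimitive characters, but these contribute at most $O(D)$ in total and are easily absorbed into the final estimate.
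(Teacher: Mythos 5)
Your argument is correct and is exactly the standard proof of Hayes's theorem, which the paper only cites (to \cite{Hayes1965} and \cite{Car1999}) rather than reproducing: orthogonality of Dirichlet characters reduces the error term to bounding $\pi(n,\chi)$ for non-principal $\chi$, and the Weil bound on the inverse roots of the polynomial $L(u,\chi)$ (of degree at most $\deg Q - 1$) gives $\pi(n,\chi)\ll (\deg Q)\,q^{n/2}/n$, which on averaging over characters yields the stated pointwise estimate. The bound $D\,q^{n/2}/n$ you obtain (with $D=\deg Q$) is in fact slightly sharper than the stated $(\log 2|\mathfrak{q}|)\,|A(N)|^{1/2}$, and the level-of-distribution assertion should, as elsewhere in the paper, be read as holding for any $\theta<1/2$.
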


Theorem \ref{thm:hinz} is contained in the somewhat more general main theorem of \cite{Hinz1988}. Theorem \ref{thm:hayes} is a consequence of Weil's Riemann Hypothesis and was first deduced by Hayes \cite{Hayes1965} (see also \cite{Car1999}).

\section{Applications}
\label{sec:applications}

\subsection{Bounded gaps in totally real number fields} The proof of Corollary \ref{cor:bounded_real} makes use of the following simple observation.
\label{subsec:real}

\begin{lemma}\label{lem:admissibleinZ} Suppose that $\mathcal{H}$ is an admissible tuple in $\mathbb{Z}$. Then $\mathcal{H}$ is also an admissible tuple in $\mathcal{O}_K$ for every number field $K$.
\end{lemma}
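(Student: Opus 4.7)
The plan is to reduce admissibility modulo an arbitrary prime ideal $\mathfrak{p}$ of $\mathcal{O}_K$ to admissibility modulo the rational prime $p$ lying below it. Concretely, I will start by fixing an arbitrary nonzero prime ideal $\mathfrak{p} \subseteq \mathcal{O}_K$ and letting $p$ be the unique rational prime with $\mathfrak{p}\cap\mathbb{Z} = p\mathbb{Z}$. The inclusion $\mathbb{Z}\hookrightarrow\mathcal{O}_K$ then induces an injection of residue fields $\mathbb{Z}/p\mathbb{Z}\hookrightarrow \mathcal{O}_K/\mathfrak{p}$, which is the key ingredient I will exploit.

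Next, I will invoke admissibility of $\mathcal{H}$ in $\mathbb{Z}$ to produce an integer $a \in \mathbb{Z}$ such that $a \not\equiv h_i \pmod{p}$ for every $i=1,\dots,k$. I then claim that this same $a$, viewed as an element of $\mathcal{O}_K$, witnesses the non-coverage of $\mathcal{O}_K/\mathfrak{p}$ by $\{h_1,\dots,h_k\}$. For if $a \equiv h_i \pmod{\mathfrak{p}}$ for some $i$, then $a-h_i \in \mathfrak{p}$; but $a-h_i \in \mathbb{Z}$, so $a-h_i \in \mathfrak{p}\cap\mathbb{Z} = p\mathbb{Z}$, contradicting the choice of $a$. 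Hence $\{h_1,\dots,h_k\}$ misses the residue class of $a$ modulo $\mathfrak{p}$, and since $\mathfrak{p}$ was arbitrary, $\mathcal{H}$ is admissible in $\mathcal{O}_K$.

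There is essentially no obstacle here; the lemma is a one-line observation dressed up to be used later. The only thing that requires any care is recording that $\mathfrak{p}\cap\mathbb{Z}$ is indeed of the form $p\mathbb{Z}$ for a rational prime $p$ (i.e., that $\mathfrak{p}$ lies over a rational prime), which is the standard fact that contracting a prime ideal along an integral extension gives a prime ideal, and that in $\mathbb{Z}$ the only primes are $(0)$ and $(p)$; since $\mathfrak{p}\neq(0)$, contraction yields $(p)$ for some rational prime $p$.
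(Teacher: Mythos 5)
Your proof is correct. It rests on the same underlying fact as the paper's argument, namely that the map $\mathbb{Z}\to\mathcal{O}_K/\mathfrak{p}$ factors through an embedding of $\mathbb{Z}/p\mathbb{Z}$ into $\mathcal{O}_K/\mathfrak{p}$, where $p$ is the rational prime below $\mathfrak{p}$. The presentation differs slightly: the paper splits into cases by the residue degree of $\mathfrak{p}$ (if $\deg\mathfrak{p}>1$, the image of $\mathcal{H}$ sits in the proper prime subfield and trivially fails to cover; if $\deg\mathfrak{p}=1$, identify $\mathcal{O}_K/\mathfrak{p}$ with $\mathbb{Z}/p\mathbb{Z}$ and invoke admissibility over $\mathbb{Z}$), whereas you avoid the case split entirely by exhibiting a uniform witness $a\in\mathbb{Z}$ missed by $\mathcal{H}$ modulo $p$, and observing that $a$ remains a missed residue modulo $\mathfrak{p}$ since $a-h_i\in\mathfrak{p}\cap\mathbb{Z}=p\mathbb{Z}$ would be a contradiction. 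Your version is marginally more direct; the paper's version makes explicit that only degree-one primes ever pose a constraint, which is a slightly more informative way to phrase the observation. Either is a fine one-line proof.
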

\begin{proof} The reduction of $\mathcal{H}$ modulo $\mathfrak{p}$ always lands in the prime subfield of $\mathcal{O}_K/\mathfrak{p}$ and so cannot cover $\mathcal{O}_K/\mathfrak{p}$ unless $\mathfrak{p}$ has degree $1$. But if $\mathfrak{p}$ has degree $1$, then $\mathcal{O}_K/\mathfrak{p} \cong \mathbb{Z}/p\mathbb{Z}$ for some rational prime $p$, and $\mathcal{H}$ fails to cover $\mathbb{Z}/p\mathbb{Z}$ since $\mathcal{H}$ is admissible in $\mathbb{Z}$.
\end{proof}

Suppose now that $K$ is totally real. By Theorem \ref{thm:hinz}, the primes in $K$ have level of distribution $\theta$ for any $\theta < \frac{1}{2}$. Maynard \cite[Proposition 4.3]{Maynard2013} has shown that the number $M_{105}$ in Corollary \ref{cor:maincor} satisfies $M_{105} > 4$. Corollary \ref{cor:bounded_real} follows now from Corollary \ref{cor:maincor}, Lemma \ref{lem:admissibleinZ}, and the result of Engelsma that there exists an admissible $105$-tuple $h_1 < h_2 < \dots < h_{105}$ of rational integers with $h_{105}-h_1=600$.

\subsection{Gap densities in $\mathbb{F}_q(t)$}
\label{subsec:fun}

We now turn our attention to Theorem \ref{thm:functionfield_proportion}, which we recall concerns gaps between monic irreducibles of fixed large degree $n$.

\begin{proof}[Proof of Theorem \ref{thm:functionfield_proportion}]  Let $k_0:=k_0(2)$ from Theorem \ref{thm:function_field}, and assume that $q\geq k_0+1$.

\emph{(i)} We wish to show that any monomial $a\cdot t^d \in \mathbb{F}_q[t]$ occurs as a gap between monic irreducibles of degree $n$ for every sufficiently large $n$ satisfying $(n-d,q-1)=1$.

For any $q$, the tuple $\{\alpha t^d : \alpha\in\mathbb{F}_q^\times\}$ is admissible, and so, because $q\geq k_0+1$, we may apply Theorem \ref{thm:function_field}.  We thus see that, for each sufficiently large $n$, some monomial $c\cdot t^d$ occurs as a gap between monic irreducibles of degree $n$; call these irreducibles $f_1(t)$ and $f_2(t)$.  If $(n-d,q-1)=1$, there is an $\omega\in\mathbb{F}_q^\times$ such that $\omega^{n-d}=c/a$, and we note that the polynomials $f_1(\omega t)/\omega^n$ and $f_2(\omega t)/\omega^n$ are monic and irreducible.  We then compute that
\[
\frac{f_1(\omega t)}{\omega^n} - \frac{f_2(\omega t)}{\omega^n} = \frac{c \cdot (\omega t)^d}{\omega^n} = \frac{c}{\omega^{n-d}} \cdot t^d = a\cdot t^d.
\]

\emph{(ii)} We now turn our attention to the second part of Theorem \ref{thm:function_field} concerning the proportion of degree $d$ polynomials that appear as gaps in degree $n$.

Let $Z(k,d,n)$ denote the assertion that, for any admissible $k$-tuple $(h_1,\dots,h_k)$ such that each of $h_1,\dots,h_k$ and $h_1-h_2,h_1-h_3,\dots,h_{k-1}-h_k$ is of degree $d$, there is an $f\in\mathbb{F}_q[t]$ of degree $n$ such that at least two of $f+h_1,\dots,f+h_k$ are monic and irreducible; we note that Theorem \ref{thm:function_field} implies that $Z(k_0,d,n)$ holds for any $d$ provided that $n$ is sufficiently large.  We will prove by induction on $k \leq k_0$ that if $Z(k,d,n)$ holds, then the proportion of polynomials of degree $d$ appearing as gaps in degree $n$ is at least
$
\frac{1}{k-1}-\frac{1}{q-1}.
$

If $k=2$, the assertion is clear: $Z(2,d,n)$ implies that every non-zero polynomial of degree $d$ appears as a gap. For $k \geq 3$, we note that either $Z(k-1,d,n)$ holds or it doesn't.  If we are in the former case, then, as $1/(k-1)$ is decreasing, the conclusion follows.  On the other hand, if $Z(k-1,d,n)$ does not hold, then there must be $h_1,\dots,h_{k-1}$ as above for which there is no $f\in\mathbb{F}_q[t]$ of degree $n$ such that two of $f+h_1,\dots,f+h_{k-1}$ are monic irreducibles.  Now $q\geq k_0+1 > k$; thus, for {\bf any} $h$ of degree $d$ with each difference $h-h_1,\dots,h-h_{k-1}$ also of degree $d$, the tuple $(h_1,\dots,h_{k-1},h)$ is admissible. Since we are assuming $Z(k, d, n)$ holds, there must be an $f$ of degree $n$ for which $f+h$ and some $f+h_i$ are both monic irreducibles; hence, $h-h_i$ occurs as a gap.  Varying over the $(q-1-(k-1))\cdot q^d$ such $h$, each gap can appear at most $k-1$ times, whence the number of distinct gaps is at least
\[
\frac{(q-1-(k-1))\cdot q^d}{k-1}.
\]
Noting that there are $q^d\cdot(q-1)$ elements of degree $d$, the claim follows.  Lastly, the assertion about monomials comes from only looking at tuples $(h_1,\dots,h_k)$ with each $h_i$ a distinct monomial of degree $d$.
\end{proof}

\section*{Acknowledgements}
This work began at the American Institute of Mathematics workshop on arithmetic statistics over finite fields and function fields. We would like to thank AIM for providing the opportunity for us to work together.

\bibliographystyle{abbrv}
\bibliography{bibliography}

\end{document}